\newcommand{\be}{\begin{equation}}
\newcommand{\ee}{\end{equation}}
\newcommand {\R}{\mathbb R}
\theoremstyle{plain}
\newcommand{\sq}[1]{\mathbin{\mathpalette\make@circled{#1}}} 
\newcommand{\make@circled}[2]{%
	\ooalign{$\m@th#1\smallbigcirc{#1}$\cr\hidewidth$\m@th#1#2$\hidewidth\cr}%
}
\newcommand{\smallbigcirc}[1]{%
	\vcenter{\hbox{\scalebox{1.2}{$\m@th#1\square$}}}%
}
\declaretheorem[name={Example},qed={\lower-0.3ex\hbox{$\square$}} ] {Example}
\title{On the gain of entrainment in the~$n$-dimensional   ribosome flow~model}
 \author{Ron Ofir and Thomas Kriecherbauer and Lars Gr\"{u}ne 
 and Michael Margaliot\thanks{RO is with the Andrew and Erna Viterbi Faculty of Electrical and Computers Eng., Technion---Israel Institute of Technology, Haifa 3200003, Israel.   TK and LG  are  with the
Mathematical Institute, University of Bayreuth, Germany.
MM (michaelm@tauex.tau.ac.il) is with the School of Elec. Eng. - Systems, Tel Aviv University, 69978, Israel.
This research   is partially supported by    research grants from the~DFG (GR 1569/24-1 and KR 1673/7-1) and the~ISF.}}
\newtheorem{theorem}{Theorem}
\newtheorem{definition}{Definition}
\newtheorem{corollary}[theorem]{Corollary}
\newtheorem{proposition}[theorem]{Proposition} 
\newtheorem{remark}{Remark}
\newacronym{rfm}{RFM}{Ribosome Flow Model}
\newacronym{pmp}{PMP}{Pontryagin Maximum Principle}
\newacronym{tas}{TASEP}{Totally Asymmetric Simple Exclusion Process}
\newcommand{\mean}[1]{\overline{ #1}}
\newcommand*\diff{\mathop{}\!\mathrm{d}}
\definecolor{darkgreen}{rgb}{0, 0.6, 0}
\begin{document}
	\maketitle 

\begin{abstract}
    The ribosome flow model (RFM) is a phenomenological  model for the flow of particles along a~1D chain of~$n$ sites. It has been extensively used to study ribosome flow  along the mRNA molecule during translation. When the transition rates along the chain are time-varying and jointly $T$-periodic the~RFM entrains, i.e., every trajectory of the RFM  converges to a unique $T$-periodic solution that depends on the transition rates, but not on the initial condition.
    In general, entrainment to periodic excitations like the 24h solar day or the 50Hz frequency of the electric grid is important in numerous  natural and artificial systems. 
    An interesting question, called the gain of entrainment~(GOE)   in the~RFM, is whether proper coordination of the periodic translation rates  along the mRNA can lead to a  larger average protein production rate.  Analyzing the~GOE in the RFM is non-trivial and partial results exist only for the~RFM with dimensions~$n=1,2$.  We use a new approach to derive  several results on the~GOE in the general $n$-dimensional~RFM. Perhaps surprisingly, we rigorously characterize  several cases where  there is no~GOE, so  to  maximize  the average production rate in these cases, the best choice  is to use constant transition rates 
    all along the chain. 
\end{abstract}

\begin{IEEEkeywords}
Contracting systems,  mRNA translation, totally asymmetric simple exclusion process~(TASEP), 
entrainment, periodic solutions.
\end{IEEEkeywords}

\section{Introduction}
 
The totally asymmetric 
simple exclusion process~(TASEP) is a
fundamental model in statistical mechanics~\cite{solvers_guide,kriecherbauer_krug2010,TASEP_book}.
The basic version of 
TASEP includes~$n$ sites, ordered along a 1D lattice, and particles that  hop randomly in a   unidirectional manner, that is, from site~$i$ to site~$i+1$.  Each site can be either empty or include a particle, 
and  a particle can only hop into an empty site. This generates an intricate coupling between the particles. 
In particular, if a particle is ``stuck'' in  a  site  for a long time then particles will accumulate in the sites behind it, thus generating a ``traffic jam'' of particles.

TASEP has been used to model the flow of ribosomes along the mRNA~\cite{TASEP_tutorial_2011} and other processes of   
  intracellular transport~\cite{RevModPhys.85.135}, pedestrian and vehicular traffic~\cite{TASEP_PEDES}, the movement of ants, and numerous other natural and artificial phenomena~\cite{TASEP_book}.  
However, rigorous analysis of TASEP is difficult and closed-form results are known  only for very 
special cases of transition rates along the lattice~\cite{solvers_guide}.

   The ribosome flow model~(RFM) is the mean-field approximation of TASEP~\cite{reuveni2011genome}. The RFM  is a deterministic model composed of~$n$ non-linear, first-order ODEs. 
  The RFM is highly amenable to analysis using tools from systems and control theory: the RFM  is a contracting system~\cite{sontag_cotraction_tutorial},  a cooperative system~\cite{margaliot2012stability},
  and, moreover, it is also a totally positive differential system~\cite{margaliot2019revisiting}. 
  The RFM can also be interpreted as a  compartmental  chemical reaction network~(CRN) 
  with transition rates that depend on the amount
of  particles 
and free space in various compartments~\cite{RFM_as_CRN}. The~RFM can also be derived via a special finite volume spatial discretization of widely used hyperbolic PDE
flow models~\cite{RFM_FROM_PDE}, and can also be represented as a port-Hamiltonian  system~\cite{RFM_AS_PORT}. 
 
  Understanding the regulation and dynamics of ribosome  flow and, consequently, protein production rate is a fundamental problem in biology and medicine. 
  For example, ``traffic jams'' of ribosomes along the mRNA 
  have been implicated with various diseases~\cite{neurojams,tuller_traffic_jams2018}. Rigorous analysis of ribosome flow
  is also important in scientific
  fields that include  manipulations of   
  the translation machinery like synthetic biology,
  mRNA viruses, and biotechnology.

  The RFM and its variants  has been extensively used to model and analyze the flow of ribosomes along the mRNA during translation (see, e.g.,~\cite{rfm_max,rfm_sense,EYAL_RFMD1,alexander2017,rfmr_2015,down_reg_mrna,rfm_feedback,randon_rfm,Aditi_abortions,Aditi_extended_2022,Ortho_RFM}).
  More recently, networks of interconnected RFMs have been used to model and analyze large-scale translation in the cell, and the effect of   competition for shared resources like ribosomes and tRNA molecules~\cite{allgower_RFM,Raveh2016,nani,aditi_networks,fierce_compete}. 
 Models based on networks of~RFMs
have  also been   validated experimentally. It was shown that such a model can  predict the density of ribosomes along different mRNAs, the protein levels of different genes, and can even be used for engineering ribosomal traffic jams (see, for example, \cite{Zur2020,reuveni2011genome,HALTER2017267}).  
  
  Ref.~\cite{RFM_entrain} studied the RFM when the entry, exit, and elongation rates along the chain are jointly periodic, with a common period~$T$, and proved  that in this case the dynamics admits a unique 
  $T$-periodic solution that is globally exponentially stable. 
  If we view the   rates as a periodic excitation (e.g., due to the periodic cell-cycle process) then the 
  state-variables in the RFM, and thus also the protein production rate, \emph{entrain} to the excitation. 
  
  The cell-cycle is a periodic program that controls  DNA synthesis and cell division.
   Proper execution
of the cell-cycle program, and its coordination with cell growth, entails  the expression and activation of key proteins at specific times along the period~\cite{translation_and_division}.  
  The eukaryotic cell cycle is controlled by periodic gene expression~\cite{peri_cell_cycle}. 
  There is growing evidence that protein levels of cell-cycle related genes are  regulated also  via the   translation machinery. 
Ref.~\cite{Higareda2010} reports that the expression of the 
human translation initiation factor 3~(eIF3)    oscillates during cell-cycle, with one
maximum expression peak in the early S phase and a second during mitosis. Ref.~\cite{Frenkel2012}   argue that periodicity in the tRNA levels of
bottleneck  codons induces periodicity in the translation rate. 
Ref.~\cite{patil2012}  showed that the levels of 16 tRNA modifications, and thus the translation efficiencies of different codons oscillate during cell-cycle.   Their results imply that translation regulation has a direct role in cell-cycle related oscillations.   
  Another possible regulation mechanism is via control of
the transcription rate of tRNA genes (and other genes), resulting in oscillations in intra-cellular tRNA
levels [12]. Since the decoding time of codons is affected by the available levels of the tRNA molecules
recognizing them, this may eventually lead to oscillations in the decoding times
of different codons.
 A recent paper~\cite{cell_cycle_dep_translation} used a sophisticated  single-cell ribo-seq method
to show that 
limitation for a particular amino acid causes ribosome pausing at a subset of the codons encoding the amino acid. However, this pausing is only observed in a sub-population of cells correlating to its cell cycle state.

  Refs.~\cite{RFM_IEEE_CL,max_period_output_RFM} considered    the gain of entrainment~(GOE) in the~RFM.
  To explain the GOE problem, consider a general  nonlinear
  control system:
  \begin{align}\label{eq:sys}
  \dot x(t)&=f(x(t),u(t) ),\nonumber \\
  y(t)&=g(x(t)), 
  \end{align}
  where~$x\in\R^n$ is the state-vector,~$u\in\R^m$ is the input,   and~$y\in\R^1$ is the scalar  output. We assume that  the output represents  
  a quantity that should be maximized (in some suitable sense). For example, in the context of biotechnology, the output may be the     production rate of a desired protein. 
  
  A control~$u$ is called~$T$-periodic if~$u(t+T)=u(t)$ for all~$t\geq 0$.
  Suppose that for any~$T$-periodic control the system~\eqref{eq:sys}
  entrains, that is, $x$  converges to a unique $T$-periodic solution~$\gamma(t)$, $t\in[0,T)$, that   depends on the control, but not on the initial condition~$x(0)$. Then the output converges to the    scalar~$g(\gamma)$. 
  Now fix a $T$-periodic control~$u$, and let
  \begin{align*}
      \mean{u} &:=\frac{1}{T}\int_0^T u(t)\diff t, \\
      \mean{y} &:=\frac{1}{T}\int_0^T g(\gamma(t)) \diff t,  
      \end{align*}
      that is, the averages of the input and the  unique periodic    output.
  If we apply the constant 
  control~$v(t):=\mean{u}$ then the entrainment property implies  that the state will converge to a unique  equilibrium~$e$, 
  that   depends on~$\mean{u}$, but not on the initial condition~$x(0)$, and thus~$y(t)$ converges to~$g(e)$. We say that the system admits a GOE for the input~$u$ if
  \[
            \mean {y} > g(e).  
  \]
 In other words, we  consider 
 two controls~$u(t)$ and~$v(t):=\mean{u} $ with the same average,  
  and compare the average of the corresponding asymptotic  outputs. A GOE implies that entrainment  does not only entail synchronization to periodic excitations, but also an \emph{improvement in the average output}.  This 
 general  
  property  may be  relevant in many fields including 
  protein production during the cell cycle division program, 
  seasonal cycles of infectious diseases~\cite{seasonal_epidemics}, 
   periodic fishery~\cite{peri_fishing}, 
  and  periodically-operated chemical reactors~\cite{Periodic_Chemical_Reactors}. 
  
 Since periodic controls allow more flexibility than constant controls, one may intuitively expect that many systems admit GOE.  
  However,  analysing the GOE in a nonlinear system is non-trivial. Many non-linear systems do not entrain at all,  that is, the response to a periodic input is not necessarily convergence  to   a periodic solution (see, e.g.,~\cite{NIKOLAEV20181232}).  
  Even if the system does entrain,  the periodic solution~$\gamma(t)$ (and its integral along a period) is
  typically not known explicitly.

  A dynamical system is called \emph{contractive} if any two solutions approach each other at an exponential rate~\cite{bullo_contractive_systems,sontag_cotraction_tutorial}.
 Contractive systems entrain to periodic excitations~\cite{sontag_cotraction_tutorial,LOHMILLER1998683} and also admit a well-defined
  frequency response function~\cite{pavlov}, but again this is not   known explicitly. Thus, rigorous analysis of the GOE   
 is not immediate  even for scalar contractive systems. 
  The next example demonstrates   this in  a simple synthetic system. 
  \begin{Example}\label{exa:tanh}
  Consider the scalar   control system
  \begin{align*}
      \dot x(t) &= 1-\frac{3}{2} \tanh(x(t))+u(t),\\ 
  y(t)&=x(t).
  \end{align*}
 Note that this  is in  the form in~\eqref{eq:sys} 
  with~$f(x,u)=1-\frac{3}{2} \tanh(x )+u$ and~$g(x)=x$.
    Assume that~$u(t)$ takes values in~$[-1,1]$. The state space of the dynamics is~$\Omega:=\R_+$. 
  The Jacobian of the vector field is~$J(x)= \frac{-3}{2\cosh^2(x)} $, so in particular~$J(x)<0$ for all~$x\in\Omega$ implying that the system is contractive and thus entrains to periodic excitations. 
  We consider two controls. The first is~$u(t)=\sin(2\pi t)$ that is~$T$-periodic, with~$T=1$,  and has average~$\mean u=0$, and the second is the constant control~$v(t)\equiv \mean u  = 0$. 
  For the latter control the solution converges to the equilibrium point~$e:= \tanh^{-1}(2/3)\approx     0.8047
 $, so the output converges to~$ g(e)= e$. Fig.~\ref{fig:tanh} depicts the attracting periodic solution~$\gamma(t)$, $t\in[0,1]$, for the control~$u(t)$. Numerically computing\footnote{All the
simulations in this paper  were performed using Matlab, and  all numerical values are to four digit accuracy.} the average of this solution over a period yields~$\mean y=\int_0^1 \gamma(t)\diff t =0.8127 $, and thus the system admits GOE for this control.
  \end{Example}
  
  \begin{figure}
 \begin{center}
  \includegraphics[scale=0.6]{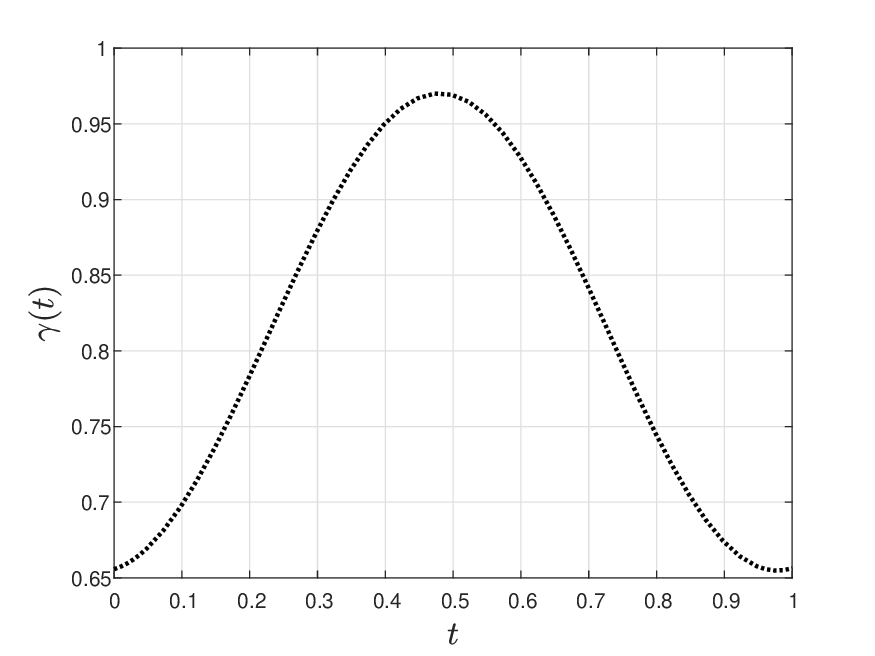}
	\caption{Periodic solution~$\gamma(t)$ as a function of~$t$ for the system in Example~\ref{exa:tanh}.}
	\label{fig:tanh} 
\end{center}
\end{figure}
  
  Analyzing GOE is closely related to 
     periodic optimal control~\cite{peri_opti_cont}.
    The basic idea is to pose the problem of 
    maximizing the average of the  output, \emph{along the periodic solution},  under an integral constraint on the control, namely,~$\frac{1}{T}\int_0^T u(t)\diff t = \mean{u}$. 
   However,   it seems that
   the application of 
   standard analysis tools like Pontryagin's maximum principle  to this problem 
   provide explicit information only for low-dimensional   systems, e.g., when~$n=1$~\cite{1dGOE,scalar_int}. 
   Another known approach for proving the optimality of equilibrium solutions is based on
   passivity/dissipativity techniques~\cite{GruM16}. However, so far for the~RFM we have not been able to establish the kind of passivity/dissipativity that we would need for this kind of analysis. While dissipativity results for generalized RFMs exist \cite{RFM_AS_PORT}, it is not clear how they can be extended such that the transition rates play the role of the control, which would be needed for a GOE analysis with respect to periodic variations of these rates. Moreover, the integral constraints on the control cause  technical difficulties. 
   
 Protein synthesis is known 
   to be one of the most energy consuming processes in the cell~\cite{energy_consuming}. A  GOE may make this process more efficient on average, and thus reduce the effective cost.
  To study  this problem in the context of translation,   we develop a new approach to analyzing   GOE in  the~$n$-dimensional~RFM. This is based on integrating the differential equations and ``higher-order moments'' of these differential equations along the periodic solution. 
  
  It turns out that the algebraic derivations are simplified if we actually consider  the differential equations for the  centered variables obtained by  subtracting from each state  variable its equilibrium point corresponding to 
   the case of  constant controls~$u_i(t)\equiv \mean{u}_i$ for all~$i$. Using this, we prove several new results (see Theorem~\ref{thm:main} below). For example, we show that for an~$n$-dimensional RFM, with~$n$ odd, and equal 
   internal rates   there 
   is no~GOE. 
   To the best of our knowledge, these are the first analysis results on the~GOE in a general  $n$-dimensional~RFM.  
  
  The remainder of this paper is organized as follows.  The next section reviews the RFM and formally defines the GOE in the~RFM. 
  Section~\ref{sec:main} presents our main results. These show that in various~$n$-dimensional RFMs there is no~GOE. 
   The final section concludes the paper.

  \section{Preliminaries}
  In this section, we briefly review the RFM, and the problem of  analyzing~GOE. 
  \subsection{The Ribosome Flow Model}
   The RFM 
   is a phenomenological 
   compartmental 
model that includes~$n$ sites ordered along a 1D chain. Particles flow along the chain from left to right (i.e., from site~$i$ to site~$i+1$). 
The~RFM includes~$n$ state variables~$x_i(t)$, $i=1,\dots,n$.
Every state variable takes values in~$[0,1]$, where~$x_i(t)$ represents the normalized density of particles at site~$i$ at time~$t$. Thus,~$x_i(t) =0 $
[$x_i(t)=1$] implies that site~$i$ is empty [completely full] at time~$t$. 
The state-space of the RFM is  thus  the~$n$-dimensional unit cube~$[0,1]^n$.

The~RFM also includes $n$ positive  time-varying  transition rates,~$u_0(t),\dots,u_n(t)$,
where a large value of~$u_i(t)$ implies that it is relatively easy for a particle to move from site~$i$ to site~$i+1$ at time~$t$. 
In particular~$u_0(t)$ [$u_n(t)$] 
is called the initiation rate [exit rate] at time~$t$. 
In the context of translation, every site corresponds to a group of 
codons along the mRNA, 
and~$u_i(t)$ models biophysical properties  like the abundance of cognate tRNA molecules at time~$t$.

The  RFM  consists of~$n$ deterministic   first-order ODEs:
\begin{equation}\label{eq:RFM_nd}
    \dot x_i(t) = u_{i-1}(t)x_{i-1}(t) (1 - x_{i}(t))   - u_i(t) x_i(t) (1 - x_{i+1}(t))  , \quad i = 1, \dots, n,
\end{equation}
where $x_0(t) := 1$ and $x_{n+1} (t):= 0$. If we view the transition rates as controls, then this is a nonlinear  control system,  as the equations include products of the state-variables and the controls.

For example, the RFM with~$n=3$ is given by
\begin{align}\label{eq:rfm3}
    \dot x_1& = u_{0} (1 - x_1)   - u_1 x_1 (1 - x_2),\nonumber\\
    \dot x_2&= u_1 x_1 (1 - x_2)-u_2x_2(1-x_3),\\
    \dot x_3&= u_2 x_2 (1 - x_3)-u_3 x_3,\nonumber
\end{align}
where for the sake of simplicity we omit specifying the dependence on~$t$.

To explain these equations, consider the equation for~$\dot x_2$ in~\eqref{eq:rfm3}.
The  term~$u_1 x_1(1-x_2)$
is the flow from site~$1$ to site~$2$. This is proportional to: the transition rate~$u_1$ from site~$1$ to site~$2$; 
the density~$x_1$ of particles in site~$1$; and  the ``free space'' $1-x_2$ in site~$2$.
Similarly, 
the  term~$u_2 x_2(1-x_3)$
is the flow from site~$2$ to site~$3$. Thus, the equation for~$\dot x_2$ states that the change in density in site~$2$ is the flow into site~$2$ minus the flow out of site~$2$.

The output  rate from site~$n$ is~$R(t):= u_n(t)x_n(t)$. When modeling translation, this  corresponds to ribosomes exiting the mRNA, and thus to the protein production rate. In the context of biotechnology, a natural goal is to maximize this production rate. More generally, 
in many transportation systems modeled using the RFM a natural goal is to maximize~$R(t)$ (in some well-defined sense).  

Just like TASEP, 
the RFM allows to model and analyze the evolution of traffic jams of particles. 
This is due to the fact that the entry rate to site~$i$ depends on the free space~$1-x_i$, which may be interpreted as a ``soft version'' of the simple exclusion principle. 
To explain this, assume that~$u_2(t)\equiv\varepsilon$, with~$\varepsilon$ positive and close to zero. 
Then
\[
\dot x_2 \approx     u_1 x_1 (1 - x_2) \geq 0,
\]
so we expect~$x_2$ to increase towards one. Then~$1-x_2$ decreases  towards  zero, so 
\[
\dot x_1 \approx    u_0  (1 - x_1) \geq 0,
\]
so~$x_1$ will also increase towards one. In this way, a traffic jam of particles
is formed behind the slow transition rate.

Consider 
positive and jointly $T$-periodic   transition rates~$u_0(t),\dots,u_n(t)$, 
 and denote the average of~$u_i(t)$ along a period by~$\mean{u}_i$.
Ref.~\cite{RFM_entrain} showed 
that in this case the RFM  admits  a unique globally exponentially stable~(GES) $T$-periodic solution~$\gamma$,
satisfying~$\gamma(t) \in (0,1)^n$ for all~$t\in[0,T)$. The next example demonstrates this.
\begin{Example}\label{exa:n3}
Consider  an  RFM with dimension~$n=3$,   positive and~$2\pi$-periodic rates
  $u_0(t)=3+\cos(t+5)$,
  $u_1(t)=1$,
  $u_2(t)=4+2\sin(t-4)$,
  $u_3(t)=2-\cos(t-1)$.
Fig.~\ref{fig:entra}
depicts the solutions~$x_i(t)$ for 
the   (arbitrarily chosen)   initial condition~$x(0)=\begin{bmatrix}
   0.3& 0.4 &0.5
  \end{bmatrix}^T$. It may be seen that every~$x_i(t)$ converges to a~$2\pi$-periodic solution~$\gamma_i$. The convergence to this~$\gamma$ holds for any initial condition~$x(0)\in[0,1]^3$. 
  \end{Example}

\begin{figure}
 \begin{center}
  \includegraphics[scale=0.6]{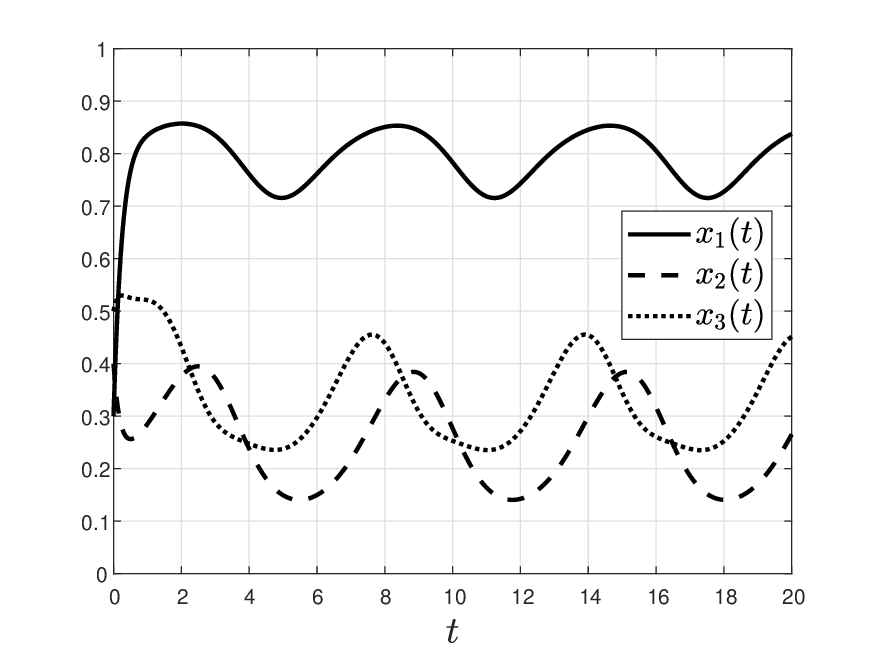}
	\caption{Trajectories~$x_i(t) $ as a function of~$t$ in
	an RFM  with~$n=3$ and $2\pi$-periodic rates. Every state-variable converges to a periodic pattern, with period~$2\pi$.}
	\label{fig:entra} 
\end{center}
\end{figure}

In particular, if the transition rates 
are set to   constant values~$u_i(t) \equiv \mean{u}_i$ for all~$i$ then the  RFM  admits a   unique GES equilibrium point~$e=\begin{bmatrix} e_1 & \dots & e_n \end{bmatrix}^T\in(0,1)^ n $.
It follows 
from~\eqref{eq:RFM_nd} that~$e$ satisfies
\begin{equation}\label{eq:RFM_nd_eq}
    \mean{u}_{i-1} e_{i-1} (1 - e_{i})   = \mean{u}_i e_i  (1 - e_{i+1})  , \quad i = 1, \dots, n,
\end{equation}
with $e_0 := 1$ and $e_{n+1} := 0$.

  \subsection{Gain of Entrainment in the RFM}
  Fix positive numbers~$\mean{u}_i$, $i=0,\dots,n$. We compare two cases. In the first, we apply the constant  control~$u_i(t)\equiv \mean{u}_i$, $i=1,\dots,n$. 
  Then the RFM state-variables converge to an equilibrium~$e:=\begin{bmatrix}
  e_1&\dots & e_n
  \end{bmatrix}^T\in(0,1)^n$ satisfying~\eqref{eq:RFM_nd_eq}. In particular, the protein production rate~$R(t) $ converges to
  \[
  R_C:= \mean{u}_ne_n,
  \]
  where the subscript~$C$ stands for constant.
  
  In the second case, we fix positive and~$T$-periodic controls~$u_i(t)$, such that~$\frac{1}{T}\int_0^T u_i(t)\diff t= \mean{u}_i$. This constraint implies that we invest, on average,  the same ``control effort'' as in the case of constant controls. 
  Then the~RFM state-variables converge to a unique $T$-periodic solution~$\gamma:[0,T)\to (0,1)^n$. 
  In particular, the average  
  production rate  converges to 
  \[
 R_P:=  \frac{1}{T} \int_0^T u_n(t) \gamma_n(t)\diff t, 
  \]
  where the subscript~$P$ stands for periodic. 
  
  We are interested in comparing~$R_C$ and~$R_P$. We begin with a simple example. 
\begin{Example}
We simulated  the RFM in Example~\ref{exa:n3} 
for the (arbitrarily chosen) initial 
condition~$ \begin{bmatrix}
   0.3& 0.4 &0.5
  \end{bmatrix}^T$. 
  The solution at time~$20\pi$ is
  \[
  x(20\pi)=\begin{bmatrix}
0.7809  &  0.1624&    0.3290
  \end{bmatrix}^T , 
  \]
 and we assume that this point is very close to a point on~$\gamma$. 
Now simulating the dynamics from~$x(0)=\begin{bmatrix}
 0.7809   & 0.1624 &   0.3290
  \end{bmatrix}^T$  yields 
 $x(2\pi)=\begin{bmatrix} 
  0.7809  &  0.1624&    0.3290
  \end{bmatrix}^T$,
  and
  \[
  R_{P} =\frac{1}{2\pi}\int _0^{2\pi }  u_3(t) x_3(t) \diff t = 0.5927. 
  \]
  The averages of the transition rates are
  $\mean {u}_0= 3$,
 $\mean {u}_1= 1$,
 $\mean {u}_2=4 $,
 $\mean {u}_3=2 $,
and~\eqref{eq:RFM_nd_eq} yields~$e_3=0.3085$, so~$R_C=\mean{u}_3e_3=0.6171$.
Thus, in this case~$R_C>R_P$. 
\end{Example}

We say that the RFM admits a GOE for a control~$u$ if~$R_P>R_C$. It is natural to expect  that the added flexibility in using $T$-periodic, rather  than constant controls, may yield  a~GOE. The difficulty in rigorously analyzing the GOE   is due
  to the fact that~$\gamma$, which is the periodic solution of an $n$-dimensional nonlinear dynamical system,  is not known explicitly. 
  Indeed, results that establish entrainment in various types of dynamical systems are typically based on implicit arguments like fixed point theorems~\cite{entrain_trans,margaliot2019revisiting,periodic_coop_1st_integral}, and do not provide explicit information on the 
  periodic solution.

  \section{Main Results}\label{sec:main}

 In this section, we analyze several scenarios where the $n$-dimensional RFM admits no GOE for any periodic control and any period~$T>0$.

 \begin{theorem}\label{thm:main}
  Fix an arbitrary~$T>0$.
  Consider the  $n$-dimensional  RFM with positive and jointly~$T$-periodic controls~$u_i(t)$, $i=0,\dots,n$. If any of the following  six conditions holds then there is no~GOE. 
  \begin{enumerate}[(I)]
      \item \label{item:nisodd}
      $n$ is odd, and for all $i \in \{1,3,\dots,n-2\}$ there exists $\alpha_i > 0$ such that
      \be\label{eq:allinterneqia}
      u_i(t)=\alpha_i u_{i+1}(t), \text{ for all } t\in[0,T) ,
      \ee
      i.e., all the internal rates along the chain are ``proportional in pairs''.
      \item \label{item:niseven_evenpairs}
      $n$ is even, and for all $i \in \{0,2,\dots,n-2$\} there exists $\alpha_i > 0$ such that
      \be\label{eq:paireql}
          u_i(t) = \alpha_i u_{i+1}(t), \text { for all } t\in[0,T).
      \ee
       \item \label{item:niseven_oddpairs}
      $n$ is even, and for all $i \in \{1,3,\dots,n-1$\} there exists $\alpha_i > 0$ such that
      \be\label{eq:paireql2}
          u_i(t) = \alpha_i u_{i+1}(t), \text { for all } t\in[0,T).
      \ee
      \item \label{item:constrates_butu0un}
      $n$ is odd, and for all $i \in \{1,\dots,n-1\}$,
      \[
        u_i(t) \equiv \mean{u}_i,
      \]
      i.e.,  all the transition rates  are constant, except for the 
      initiation  rate~$u_0(t)$ and the 
      exit rate~$u_n(t)$.
      \item \label{item:constrates_butun}
     $n$ is even, and  for all $i \in \{0,1,\dots,n-1\}$,
      \[
        u_i(t) \equiv \mean{u}_i,
      \]
      i.e.,  all the transition rates  are constant, except for the exit rate~$u_n(t)$.
      \item \label{item:constrates_butu0}
      $n$ is even, and for all $i \in \{  1,\dots,n\}$,
      \[
        u_i(t) \equiv \mean{u}_i,
      \]
      i.e.,  all the transition rates  are constant, except for the initiation  rate~$u_0(t)$.
  \end{enumerate}
  \end{theorem}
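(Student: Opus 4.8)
The plan is to work throughout with the shifted state $z_i(t):=\gamma_i(t)-e_i$, where $\gamma$ is the attracting $T$-periodic solution and $e$ the equilibrium of the constant-rate RFM, and to extract scalar identities by integrating the $z$-dynamics and its quadratic moments over one period. For a $T$-periodic $h$ write $\langle h\rangle:=\frac1T\int_0^T h(t)\diff t$, so that $\langle u_i\rangle=\mean{u}_i$ and the boundary conventions give $z_0\equiv z_{n+1}\equiv 0$. Substituting $\gamma_i=e_i+z_i$ into~\eqref{eq:RFM_nd} and using~\eqref{eq:RFM_nd_eq}, each equation takes the form $\dot z_i=g_i+L_i(z)+u_i z_i z_{i+1}-u_{i-1}z_{i-1}z_i$, where $g_i:=u_{i-1}e_{i-1}(1-e_i)-u_i e_i(1-e_{i+1})$ has zero average ($\langle g_i\rangle=0$ by~\eqref{eq:RFM_nd_eq}) and $L_i$ is linear in $z$. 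Integrating $\langle\dot z_i\rangle=0$ gives, for every bond, a representation of the single scalar $D:=R_P-R_C$; in particular $D=\langle u_n z_n\rangle=-\langle u_0 z_1\rangle$.

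First I would record the one-dimensional case as a template, since it already exhibits the whole mechanism and the interior hypotheses are vacuous there. For $n=1$ we have $z_0=z_2=0$, the two first-moment identities give $\langle u_0 z_1\rangle=-D$ and $\langle u_1 z_1\rangle=D$, and the dynamics reduce to $\dot z_1=g_1-(u_0+u_1)z_1$. Multiplying by $z_1$ and integrating (so $\langle z_1\dot z_1\rangle=0$) yields the second-moment identity $\langle g_1 z_1\rangle=\langle(u_0+u_1)z_1^2\rangle\ge 0$. Expanding $g_1=u_0(1-e_1)-u_1 e_1$ and inserting the first-moment values collapses the left-hand side to exactly $R_C-R_P$, so $R_C-R_P=\langle(u_0+u_1)z_1^2\rangle\ge 0$. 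I expect the general proof to follow this same two-step pattern: first-moment identities express $D$, and a suitable weighted sum of second-moment identities expresses a positive multiple of $R_C-R_P$ as a manifestly nonnegative time-average of a quadratic form in $z$ weighted by the (positive) rates.

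The real work is in making this close for $n\ge2$. Multiplying $\dot z_i$ by $z_i$ and integrating leaves, besides the desired quadratic terms, cross-terms $\langle u_i z_i z_{i+1}\rangle$ and cubic terms $\langle u_i z_i^2 z_{i+1}\rangle$; an unweighted sum over $i$ telescopes the cubics only down to $\sum_{i=1}^{n-1}\langle u_i z_i z_{i+1}(z_i-z_{i+1})\rangle$, which has no definite sign. This is where the proportionality hypotheses enter: within each pair $\{i,i+1\}$ with $u_i=\alpha_i u_{i+1}$ the two adjacent flow-balance and second-moment identities can be combined so that the shared cross- and cubic-moments carrying the time-varying rate cancel, and the parity of $n$ guarantees that the pairs tile the index set $\{0,\dots,n\}$ leaving only boundary rates (which meet $z_0=z_{n+1}=0$) unpaired. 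Carrying out this elimination should reduce the whole collection of moment identities to a single relation $R_C-R_P=Q$ with $Q$ a nonnegative average of rate-weighted squares, giving $R_P\le R_C$; this elimination, not the $n=1$ algebra, is the main obstacle, and it is precisely where both the pairwise proportionality~\eqref{eq:allinterneqia}--\eqref{eq:paireql2} and the parity of $n$ are indispensable.

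Finally, the remaining assertions require no new work. In~\ref{item:constrates_butu0un},~\ref{item:constrates_butun} and~\ref{item:constrates_butu0} all interior rates are constant, hence trivially proportional in pairs with $\alpha_i=\mean{u}_i/\mean{u}_{i+1}$, so these cases are the specializations of~\ref{item:nisodd},~\ref{item:niseven_evenpairs} and~\ref{item:niseven_oddpairs} respectively and follow at once once the latter three are established.
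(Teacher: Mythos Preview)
Your setup (shifted variables~$z_i=\gamma_i-e_i$, first-moment identities yielding $D=R_P-R_C=-\langle u_0 z_1\rangle$, and the reduction of \ref{item:constrates_butu0un}--\ref{item:constrates_butu0} to \ref{item:nisodd}--\ref{item:niseven_oddpairs}) matches the paper exactly, and your $n=1$ computation is essentially the paper's Corollary. The gap is in the step you yourself flag as ``the main obstacle'': the claim that pairwise proportionality allows the cubic and cross moments to \emph{cancel} in a weighted sum of second-moment identities, producing a single relation $R_C-R_P=Q\ge 0$. This does not close. At bond~$i$ the two adjacent second-moment identities contribute cubic terms $\langle u_i z_i^2 z_{i+1}\rangle$ and $\langle u_i z_i z_{i+1}^2\rangle$; even with $u_i=\alpha_i u_{i+1}$ their difference is $\langle u_i z_i z_{i+1}(z_i-z_{i+1})\rangle$, which carries no sign, so no choice of weights eliminates these terms while keeping the rest nonnegative.

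The paper does something different and avoids this obstruction entirely. Instead of trying to cancel the cubic terms across sites, it absorbs them \emph{locally}: in the $i$-th identity the cubic and pure-quadratic pieces regroup as
\[
\bigl\langle\bigl(u_{i-1}(z_{i-1}+e_{i-1})+u_i(1-e_{i+1}-z_{i+1})\bigr)z_i^2\bigr\rangle
=\bigl\langle\bigl(u_{i-1}x_{i-1}+u_i(1-x_{i+1})\bigr)z_i^2\bigr\rangle\ge 0,
\]
since $x_{i-1},\,1-x_{i+1}\in(0,1)$ along~$\gamma$. Combined with the first-moment identity (which expresses each third-order moment $\langle u_i z_i z_{i+1}\rangle$ in terms of $\eta_{0,1}$ and second-order moments), this yields for every~$i$ the inequality
\[
\eta_{0,1}\ \ge\ \eta_{i,i+1}\,e_i^2\ -\ \eta_{i-1,i-1}\,(1-e_i)^2 .
\]
The proportionality hypothesis is then used not for cancellation but for \emph{linking}: $u_i=\alpha_i u_{i+1}$ converts $\eta_{i,i+1}$ into $\alpha_i\eta_{i+1,i+1}$, so the right-hand side at step~$i$ feeds into the right-hand side at step~$i+2$. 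The argument is a chain of alternatives (if $\eta_{2,2}\ge 0$ done; otherwise use the inequality at $i=3$; and so on), with the parity of~$n$ guaranteeing the chain terminates at a boundary inequality that forces $\eta_{0,1}\ge 0$. So the paper never produces a single identity $R_C-R_P=Q$; it produces $n$ inequalities and a case analysis. Your programme would need to be replaced by this local-absorption-plus-chaining argument.
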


  Before proving this result, we discuss some of its implications. 
  In some applications, it may be possible to control~$u_0(t)$ and/or~$u_n(t)$, but not
  the other, ``internal'' transition rates.  For example, 
  it is often stated that control of translation in eukaryotic cells is exercised mainly at the initiation step, when ribosomes are recruited to mRNA~\cite{cell_cycle_dep,tuller_ramp,Hershey01122012}.  Specifically, assume
  that all the internal rates are constant, i.e. 
  \be\label{eq:allint}
  u_1(t)\equiv \mean{u_1},\dots, u_{n-1}(t)\equiv\mean{u}_{n-1},
  \ee
  and our goal is to determine $T$-periodic and positive rates~$u_0(t)$ and~$u_n(t)$, with fixed averages~$\mean{u}_0$ and~$\mean{u}_n$, 
  that   maximize the average output rate. If~$n$ is odd then  
  assertion~\ref{item:constrates_butu0un} in Theorem~\ref{thm:main} states that 
the constant rates~$u_0(t)\equiv\mean{u}_0$
and~$u_{n}(t)\equiv\mean{u}_{n}$ 
are an optimal choice. In the case of even~$n$ assertions~\ref{item:constrates_butun} and~\ref{item:constrates_butu0} in Theorem~\ref{thm:main} provide the weaker result that constant rates are optimal if we allow only for one of the two rates $u_0$ or $u_n$ to be time-dependent.

The distinction between the cases of an RFM with~$n$ odd and $n$ even is an artifact of our analysis approach. We conjecture that there is no GOE in the RFM for any~$n$ and any periodic controls, but we do not know how to prove this more general statement.

  The remainder of this section  is devoted to the proof of Theorem~\ref{thm:main}. 
  This requires several auxiliary results. We begin by defining the  ``centered'' 
  state-variables: 
  \begin{align}
z_i(t):=x_i(t)-e_i , \quad i=1,\dots,n. 
  \end{align}
  Then~\eqref{eq:RFM_nd} yields
  \begin{equation}\label{eq:RFM_nd_shift}
    \dot z_i(t) = u_{i-1}(t) (z_{i-1} (t)+ e_{i-1}) (1 - z_i (t)- e_i) - u_i(t) (z_i(t) + e_i)(1 - z_{i+1}(t) - e_{i+1}), 
    \quad i = 1,\dots,n, 
\end{equation}
with~$z_0(t) := 0$ and $z_{n+1} (t):= 0$. 

Note that as~$x  $ converges to the unique $T$-periodic solution~$\gamma$, $z $ converges to the unique  $T$-periodic solution~$\gamma -e$. Our analysis  is based on  computing  integrals of~$\frac{d}{dt} z_i(t) $ and~$\frac{d}{dt} z_i^2(t) $ \emph{along this  periodic solution}. To simplify the notation, we 
 write~$\int_0^T\frac{d}{dt} (z_i(t))^k \diff t$, $k\in\{1,2\}$, but we \emph{always} integrate along the unique $T$-periodic solution~$\gamma$.

It is useful to introduce  the following notation. For a $T$-periodic function~$y(t)$, let
\[
\mean{y}:=\frac{1}{T}\int_0^T y(t)\diff t,
\]
that is, the average of~$y(t)$ along a period. 
Note that~$\mean{z}_i=\mean{x}_i-e_i$, so~$\mean{z}_i>0$ if and only if~(iff) the  average of~$x_i$ for the periodic transition rates is larger than 
the equilibrium value~$e_i$ corresponding to the constant rates.
We also denote
\[
\eta_{i,j}:=\mean{u_i z_j},\quad \eta_{i,j,k}:=\mean{u_i z_j z_k }  , 
\]
where, as noted above, the averages 
are always computed along the unique  periodic solution. Recall that~$z_0(t)=z_{n+1}(t)\equiv 0$, so any 
  ``moment'' $\eta$ that includes~$z_0$ or~$z_{n+1}$  (e.g.,~$\eta_{i,j,n+1}=\mean{ u_{i} z_{j} z_{n+1} } $) is zero.

The next result uses the compartmental structure of the~RFM to  derive  a simple necessary and sufficient condition guaranteeing that there 
is no~GOE. 
\begin{proposition}\label{prop:suffu}
We have
\begin{equation}\label{eq:u0z1_unzn}
     \eta_{0,1} = - \eta_{n, n},
\end{equation}
and
\be\label{eq:pott}
  R_P=-\eta_{0,1} +R_C. 
\ee
In particular, 
if~$\eta_{0,1}\geq 0$ then there is no GOE, and if~$\eta_{0,1}>0$  then~$R_P$ is strictly smaller than~$R_C$.
\end{proposition}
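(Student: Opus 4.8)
The plan is to exploit the compartmental (conservation-law) structure of the RFM together with the elementary fact that integrating the time-derivative of a $T$-periodic function over one period gives zero. First I would introduce the fluxes $f_i(t) := u_i(t) x_i(t)(1 - x_{i+1}(t))$ for $i = 0, \dots, n$, with the conventions $x_0 \equiv 1$ and $x_{n+1} \equiv 0$, so that~\eqref{eq:RFM_nd} reads $\dot x_i = f_{i-1} - f_i$. Since each state variable is $T$-periodic along the periodic solution, integrating over a period gives $\int_0^T \dot x_i \diff t = 0$, hence $\mean{f_{i-1}} = \mean{f_i}$ for every $i = 1, \dots, n$. Telescoping this chain yields the key identity $\mean{f_0} = \mean{f_n}$: the time-averaged flux is constant along the whole chain.

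Next I would rewrite both ends of this identity in terms of the shifted variables and the equilibrium $e$, using $x_i = z_i + e_i$ and linearity of the average. At the exit, $f_n = u_n x_n$, so by definition $\mean{f_n} = R_P$, while substituting $x_n = z_n + e_n$ gives the alternative expression $\mean{f_n} = \mean{u_n z_n} + e_n \mean{u}_n = \eta_{n,n} + R_C$, using $R_C = \mean{u}_n e_n$ and that $e_n$ is constant. At the entry, $f_0 = u_0(1 - x_1)$, and substituting $x_1 = z_1 + e_1$ gives $\mean{f_0} = (1 - e_1)\mean{u}_0 - \eta_{0,1}$.

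The one genuinely structural step is to recognize that $(1 - e_1)\mean{u}_0 = R_C$. This follows from~\eqref{eq:RFM_nd_eq}, which asserts that the constant-rate equilibrium flux $\mean{u}_{i-1} e_{i-1}(1 - e_i)$ is independent of $i$; evaluating it at the first site ($e_0 = 1$) gives $\mean{u}_0(1 - e_1)$, and at the last site ($e_{n+1} = 0$) gives $\mean{u}_n e_n = R_C$, so the two coincide. With this in hand $\mean{f_0} = R_C - \eta_{0,1}$, and combining the two representations of the (constant) average flux delivers everything at once: $\mean{f_0} = \mean{f_n}$ reads $R_C - \eta_{0,1} = R_P$, which is~\eqref{eq:pott}, and also $R_C - \eta_{0,1} = \eta_{n,n} + R_C$, which simplifies to $\eta_{0,1} = -\eta_{n,n}$, proving~\eqref{eq:u0z1_unzn}. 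The GOE claim is then immediate from~\eqref{eq:pott}: since $R_P - R_C = -\eta_{0,1}$, the hypothesis $\eta_{0,1} \ge 0$ forces $R_P \le R_C$ (no GOE), and $\eta_{0,1} > 0$ forces the strict inequality $R_P < R_C$.

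I expect no serious obstacle; the proposition is essentially a bookkeeping consequence of averaging flux conservation over a period. The only point demanding care is the identification of the entry-side quantity $(1 - e_1)\mean{u}_0$ with $R_C$, i.e.\ reading off the constant-rate equilibrium flux correctly from~\eqref{eq:RFM_nd_eq} at \emph{both} boundary sites; once that is in place, the remainder is substitution of $x_i = z_i + e_i$ and linearity of the average.
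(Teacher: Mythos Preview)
Your proposal is correct and follows essentially the same approach as the paper: exploit the telescoping/compartmental structure to reduce the averaged dynamics to the two boundary fluxes, then invoke the equilibrium relation $\mean{u}_0(1-e_1)=\mean{u}_n e_n=R_C$ from~\eqref{eq:RFM_nd_eq}. The paper does the same computation in the shifted variables by summing $\dot z_1+\dots+\dot z_n$ and integrating, whereas you phrase it via the fluxes $f_i$ in the original variables; the content is identical.
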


\begin{remark}
Note that~$\eta_{0,1}=\mean{u_0z_1}$ may be interpreted as the ``average correlation''
between the initiation rate~$u_0$ and the centered density~$z_1=x_1-e_1$. Note also that~\eqref{eq:u0z1_unzn} implies that there exists a time~$\tau \in [0,T)$ such that
\[
u_0(\tau)z_1(\tau)=-u_n(\tau)z_n(\tau).
\]
\end{remark}

\begin{proof}
It follows from~\eqref{eq:RFM_nd_shift} that
\[
\dot z_1+\dots+\dot z_n = u_0(1-z_1-e_1) -u_n(z_n+e_n).
\]
Integrating this equation  gives
\[
0=\mean{u_0}(1-e_1)-\eta_{0,1} -\mean{u_n}e_n -\eta_{n,n},
\]
and using~\eqref{eq:RFM_nd_eq} proves~\eqref{eq:u0z1_unzn}. 
To complete the proof  note that
\begin{align*}
    R_P&=    \mean{ u_n \gamma_n}\\
    &= \mean{u_n ( z_n+e_n )}\\
    &=\eta_{n,n} +R_C,
\end{align*}
and applying~\eqref{eq:u0z1_unzn}
yields~\eqref{eq:pott}.
\end{proof}

The next result  is based on  integrating~$\dot z_i(t)$, $i=1,\dots,n$. Roughly speaking, this allows   to express  the ``third-order moment'' $ \eta_{i,i,i+1}=\mean{u_i z_i z_{i+1}}$ as 
a linear combination of three  ``second-order moments''. 
\begin{proposition}
For any    $i \in \{0,\dots,n\}$, we have
\begin{equation}\label{eq:etas}
    \eta_{i,i,i+1} = \eta_{0,1} + \eta_{i,i}(1 - e_{i+1}) - \eta_{i,i+1}e_i.
\end{equation}
\end{proposition}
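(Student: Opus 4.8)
The plan is to exploit the only structural fact available along the attracting cycle, namely that each shifted variable is $T$-periodic and hence $\int_0^T \dot z_i(t)\diff t = z_i(T)-z_i(0)=0$, i.e.\ $\mean{\dot z_i}=0$ for every $i$. To organize the bookkeeping I would first introduce the shorthand $f_i := u_i(z_i+e_i)(1-z_{i+1}-e_{i+1})$ for the shifted flux out of site~$i$, $i=0,\dots,n$, so that \eqref{eq:RFM_nd_shift} reads compactly as $\dot z_i = f_{i-1}-f_i$. Averaging this over a period and using $\mean{\dot z_i}=0$ gives $\mean{f_{i-1}}=\mean{f_i}$ for every~$i$, whence all the averaged fluxes coincide: $\mean{f_0}=\mean{f_1}=\cdots=\mean{f_n}$. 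In particular $\mean{f_i}=\mean{f_0}$ for the index of interest, and the whole proof reduces to comparing these two averages.

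Next I would evaluate both sides explicitly. Because $z_0\equiv 0$ and $e_0=1$, the leftmost flux collapses to $f_0 = u_0(1-z_1-e_1)$, giving $\mean{f_0}=\mean{u}_0(1-e_1)-\eta_{0,1}$. For the general flux I would expand the product $(z_i+e_i)(1-z_{i+1}-e_{i+1})$ into its four monomials, multiply by $u_i$, and average term by term; this produces exactly $\mean{f_i}=\mean{u}_i e_i(1-e_{i+1})+\eta_{i,i}(1-e_{i+1})-\eta_{i,i+1}e_i-\eta_{i,i,i+1}$, the third-order moment $\eta_{i,i,i+1}$ being the average of the cross term $u_i z_i z_{i+1}$. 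The target identity \eqref{eq:etas} is then just the equation $\mean{f_0}=\mean{f_i}$ solved for $\eta_{i,i,i+1}$.

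The step that makes everything close up is the equilibrium relation \eqref{eq:RFM_nd_eq}: it says precisely that the steady-state fluxes $\mean{u}_i e_i(1-e_{i+1})$ are all equal to one another, and, since $e_0=1$, equal to $\mean{u}_0(1-e_1)$. Substituting this into $\mean{f_0}=\mean{f_i}$ cancels the two constant flux terms, leaving $-\eta_{0,1}=\eta_{i,i}(1-e_{i+1})-\eta_{i,i+1}e_i-\eta_{i,i,i+1}$, which rearranges to \eqref{eq:etas}.

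I expect the obstacles here to be purely clerical rather than conceptual; the one thing to get right is matching the constant flux term correctly through \eqref{eq:RFM_nd_eq}, together with the two boundary indices. For $i=0$ one checks, using $z_0\equiv 0$ and $e_0=1$, that both sides vanish; for $i=n$ one uses $z_{n+1}\equiv 0$ and $e_{n+1}=0$ and then invokes \eqref{eq:u0z1_unzn} from Proposition~\ref{prop:suffu} (that is, $\eta_{0,1}=-\eta_{n,n}$) to confirm the identity reduces to $0=0$. No contraction or fixed-point machinery is needed: the argument rests entirely on periodicity of the cycle plus the algebraic equilibrium equations.
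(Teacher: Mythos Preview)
Your proposal is correct and is essentially the paper's own argument: both integrate~$\dot z_i$ over a period and invoke the equilibrium relation~\eqref{eq:RFM_nd_eq} to cancel the constant flux terms, with the boundary indices handled by $z_0=z_{n+1}\equiv0$ and Proposition~\ref{prop:suffu}. Your flux shorthand $f_i$ makes the telescoping (all $\mean{f_i}$ equal) visible at once, whereas the paper writes out the recursion $\eta_{i,i,i+1}-\eta_{i,i}(1-e_{i+1})+\eta_{i,i+1}e_i=\eta_{i-1,i-1,i}-\eta_{i-1,i-1}(1-e_i)+\eta_{i-1,i}e_{i-1}$ and iterates from $i=1$; the content is identical.
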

   
\begin{proof}
  First, we prove~\eqref{eq:etas} for $i=0$ and $i=n$. For~$i=0$,~\eqref{eq:etas} becomes
  \begin{equation*}
      0 = \eta_{0,1} +0- \eta_{0,1},
  \end{equation*}
  which clearly holds. Similarly, for~$i=n$ we get
  \begin{equation*}
      0 = \eta_{0,1} + \eta_{n,n},
  \end{equation*}
  which holds due to Prop.~\ref{prop:suffu}. 
  
The rest of the proof assume that~$i \in \{1,\dots,n-1\}$. Integrating~\eqref{eq:RFM_nd_shift}  and  using~\eqref{eq:RFM_nd_eq}  gives
\begin{align}\label{eq:intzdot} 
   0 &= \mean{ u_{i-1} (1 - e_i - z_i) (z_{i-1} + e_{i-1})} - \mean{u_i (z_i + e_i)(1 - e_{i+1} - z_{i+1})} \nonumber   \\
     &= -\eta_{i-1,i-1,i} + \eta_{i-1,i-1} (1 - e_i) - \eta_{i-1,i} e_{i-1} + \eta_{i,i,i+1} - \eta_{i,i} (1 - e_{i+1}) + \eta_{i,i+1} e_i . 
\end{align}
For $i=1$
this becomes 
\begin{align}\label{eq:eta112}
    \eta_{1,1,2} &= \eta_{0,1} + \eta_{1,1}(1 - e_2) - \eta_{1,2}e_1 
\end{align}
(recall that  $z_0(t)  =z_{n+1}(t) \equiv 0$).
For~$i=2$, Eq.~\eqref{eq:intzdot} becomes
\begin{align*}
    \eta_{2,2,3} &= \eta_{1,1,2} - \eta_{1,1}(1 - e_2) + \eta_{1,2}e_1 + \eta_{2,2}(1 - e_3) - \eta_{2,3} e_2 ,
\end{align*}
and using~\eqref{eq:eta112} gives
\begin{align*}
    \eta_{2,2,3} &=   \eta_{0,1} + \eta_{2,2}(1 - e_3) - \eta_{2,3} e_2  . 
\end{align*}
Continuing in this manner 
completes  the proof.
  \end{proof}

The next result  is derived  by integrating~$\frac{1}{2} \frac{d}{dt} z_i^2(t) =   z_i(t) \dot z_i(t)$, $i=1,\dots,n$. This yields~$n$  lower bounds on~$\eta_{0,1}$. 
  
\begin{proposition}\label{prop:eta01ound}
  For any~$i=1,\dots, n$, we have
  \begin{align}\label{eq:eta01_ub_moments}
    \eta_{0,1} \geq 
    \eta_{i,i+1} e_i^2 - \eta_{i-1,i-1}(1 - e_i)^2.
  \end{align}
  Furthermore, equality is attained iff $z_i(t) \equiv 0$, that is, iff~$x_i(t)\equiv e_i$.
  \end{proposition}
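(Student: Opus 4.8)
The plan is to exploit the periodicity of the attracting solution. Since $z_i$ is $T$-periodic, $\int_0^T \frac{d}{dt}z_i^2(t)\diff t = 0$, so integrating $z_i\dot z_i = \tfrac12\frac{d}{dt}z_i^2$ along the periodic solution gives $\mean{z_i\dot z_i}=0$. I would substitute $\dot z_i$ from \eqref{eq:RFM_nd_shift}, written in the compartmental form $\dot z_i = u_{i-1}x_{i-1}(1-x_i) - u_i x_i(1-x_{i+1})$, and use the elementary identities $z_i(1-x_i) = (1-e_i)z_i - z_i^2$ and $z_i x_i = e_i z_i + z_i^2$, which follow from $x_i = e_i+z_i$. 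Grouping the quadratic terms so that each $z_i^2$ carries a positive weight yields the key identity
\[
\mean{u_{i-1}x_{i-1}z_i^2} + \mean{u_i(1-x_{i+1})z_i^2} = (1-e_i)\mean{u_{i-1}x_{i-1}z_i} - e_i\mean{u_i(1-x_{i+1})z_i},
\]
which uses only periodicity, not the equilibrium relation \eqref{eq:RFM_nd_eq}.

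The payoff of this grouping is that the left-hand side is manifestly nonnegative: along the periodic solution the weights $u_{i-1}(t)x_{i-1}(t)$ and $u_i(t)\bigl(1-x_{i+1}(t)\bigr)$ are strictly positive, because the rates are positive and $\gamma(t)\in(0,1)^n$, with the conventions $x_0\equiv1$, $x_{n+1}\equiv0$ handling $i=1$ and $i=n$. Hence the sum vanishes iff $z_i(t)\equiv 0$, i.e. $x_i(t)\equiv e_i$, which will produce exactly the claimed equality case.

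The main work is to show that the right-hand side collapses to $\eta_{0,1} + (1-e_i)^2\eta_{i-1,i-1} - e_i^2\eta_{i,i+1}$. First I would expand the two averages into moments, namely $\mean{u_{i-1}x_{i-1}z_i} = \eta_{i-1,i-1,i}+e_{i-1}\eta_{i-1,i}$ and $\mean{u_i(1-x_{i+1})z_i} = (1-e_{i+1})\eta_{i,i}-\eta_{i,i,i+1}$. Then I would eliminate the two third-order moments by invoking \eqref{eq:etas} at index $i$ (for $\eta_{i,i,i+1}$) and at index $i-1$ (for $\eta_{i-1,i-1,i}$). The hard part is purely bookkeeping: after these substitutions the $\eta_{i,i}$ and $\eta_{i-1,i}$ contributions cancel in pairs, and the two $\eta_{0,1}$ contributions combine with coefficient $(1-e_i)+e_i=1$, leaving precisely the three stated terms. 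I expect this cancellation to be the only delicate step, and I would verify it at the boundaries $i=1$ (where $\eta_{0,0}=0$) and $i=n$ (where $\eta_{n,n+1}=0$ and \eqref{eq:etas} reduces to Prop.~\ref{prop:suffu}).

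Finally, combining the identity with the nonnegativity of its left-hand side gives
\[
\eta_{0,1} - \bigl(e_i^2\eta_{i,i+1} - (1-e_i)^2\eta_{i-1,i-1}\bigr) = \mean{u_{i-1}x_{i-1}z_i^2} + \mean{u_i(1-x_{i+1})z_i^2} \geq 0,
\]
which is exactly \eqref{eq:eta01_ub_moments}, with equality iff the right-hand side above vanishes, i.e. iff $z_i\equiv 0$.
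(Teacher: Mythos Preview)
Your proposal is correct and is essentially the paper's own proof: both integrate $z_i\dot z_i$ along the periodic solution, isolate the $z_i^2$-terms with the positive weights $u_{i-1}x_{i-1}$ and $u_i(1-x_{i+1})$ to obtain the key nonnegativity (the paper's equation~\eqref{eq:tightzi}), and then substitute~\eqref{eq:etas} at indices~$i-1$ and~$i$ to eliminate the third-order moments. Your write-up is slightly more explicit about the algebraic cancellations and the boundary checks, but the argument is identical.
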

 \begin{proof}
  By~\eqref{eq:RFM_nd_shift}, 
\begin{align*}
    \dot z_i z_i &= u_{i-1} z_i  (1 - e_i - z_i) (z_{i-1} + e_{i-1}) - u_i z_i (z_i + e_i)(1 - e_{i+1} - z_{i+1}).
\end{align*}
Integrating this  and rearranging terms gives
\begin{align}\label{eq:tightzi}
    \mean{(u_{i-1}(z_{i-1} + e_{i-1}) + u_i(1 - e_{i+1} - z_{i+1})) z_i^2} &= \mean{ u_{i-1} z_i (1 - e_i) (z_{i-1} + e_{i-1})} \nonumber \\&- \mean{u_i z_i e_i(1 - e_{i+1} - z_{i+1})}.
\end{align}
The left-hand side of this equation is non-negative because the controls~$u_i(t)$ are positive for all~$t$,  and along the periodic solution~$z_i(t)+e_i=\gamma_i(t) \in(0,1)$ for all~$t$. Furthermore, the left-hand side is zero iff $z_i(t) \equiv 0$. We conclude that
\begin{align}\label{eq:etaii}  
    0 &\le \eta_{i-1,i-1,i} (1 - e_i) + \eta_{i-1,i} (1 - e_i) e_{i-1} + \eta_{i,i,i+1} e_i - \eta_{i,i} (1 - e_{i+1}) e_i,
\end{align}
with equality iff $z_i(t) \equiv 0$, and substituting the expressions for the third-order moments~\eqref{eq:etas} completes the proof. 
 \end{proof}

Note that for $i=1$ and~$i=n$,  
Proposition~\ref{prop:eta01ound}
yields
\be\label{eq:firine2}
    \eta_{0,1} \geq \eta_{1,2}e_1^2  ,
\ee
and
\be\label{eq:secine2}
  \eta_{0,1} \geq   -\eta_{n-1,n-1}(1-e_n)^2  , 
\ee
respectively. Combining  this with Proposition~\ref{prop:suffu}
implies that there is no GOE if~$\eta_{1,2}\geq 0$ or if~$\eta_{n-1,n-1 }\leq 0$.

For~$n=1$, $z_2(t)\equiv 0$, so $\eta_{1,2}=\mean{u_1z_2}=0$ and~\eqref{eq:firine2} becomes~$\eta_{0,1}\geq 0$. Combining this with Prop.~\ref{prop:suffu} yields the following result.
\begin{corollary}
  In the RFM with~$n=1$ there is no GOE. Furthermore, $R_P = R_C$ iff
  \begin{equation}\label{eq:1d_strict_loe_cond}
      u_1(t) = \frac{\mean{u}_1}{\mean{u}_0} u_0(t), \text{ for all } t\in[0,T).
  \end{equation}
\end{corollary}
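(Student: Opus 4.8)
The plan is to combine the two preceding propositions, which have already done essentially all of the work for~$n=1$, and then to carry out the elementary translation between the condition~$z_1\equiv 0$ and the rate condition~\eqref{eq:1d_strict_loe_cond}. For the first assertion I would simply record that the lower bound~\eqref{eq:firine2}, together with~$\eta_{1,2}=\mean{u_1 z_2}=0$ (which holds since~$z_2\equiv 0$ when~$n=1$), gives~$\eta_{0,1}\geq 0$. Proposition~\ref{prop:suffu}, in the form of~\eqref{eq:pott}, then yields~$R_P=-\eta_{0,1}+R_C\leq R_C$, so there is no GOE.

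For the characterization of equality, I would first note that~\eqref{eq:pott} makes the statement~$R_P=R_C$ exactly equivalent to~$\eta_{0,1}=0$. Since for~$n=1$ the bound~\eqref{eq:firine2} reads~$\eta_{0,1}\geq 0$, the equality clause of Proposition~\ref{prop:eta01ound} (with~$i=1$) tells me that~$\eta_{0,1}=0$ holds if and only if~$z_1(t)\equiv 0$, i.e.\ the periodic solution is the constant~$\gamma_1(t)\equiv e_1$. Thus the whole task reduces to showing that~$\gamma_1\equiv e_1$ if and only if~\eqref{eq:1d_strict_loe_cond} holds.

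The final step is the elementary translation. For~$n=1$ the dynamics~\eqref{eq:RFM_nd} reduces to~$\dot x_1=u_0(t)(1-x_1)-u_1(t)x_1$, and the equilibrium relation~\eqref{eq:RFM_nd_eq} reads~$\mean{u}_0(1-e_1)=\mean{u}_1 e_1$, so that~$(1-e_1)/e_1=\mean{u}_1/\mean{u}_0$. In the forward direction,~$x_1(t)\equiv e_1$ forces~$\dot x_1\equiv 0$, whence~$u_0(t)(1-e_1)=u_1(t)e_1$ for all~$t$; solving for~$u_1(t)$ and using the equilibrium identity produces exactly~\eqref{eq:1d_strict_loe_cond}. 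In the reverse direction, I would substitute~$u_1(t)=(\mean{u}_1/\mean{u}_0)u_0(t)$ into the vector field and check that the constant function~$x_1\equiv e_1$ makes~$\dot x_1$ vanish identically.

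The computations here are all routine, so there is no serious obstacle; the only point that requires care is the reverse direction of the final equivalence, where verifying that~$x_1\equiv e_1$ solves the ODE is not by itself enough. One must invoke the uniqueness of the globally attracting~$T$-periodic solution from~\cite{RFM_entrain} to conclude that this constant solution is precisely~$\gamma_1$, and hence that~$z_1\equiv 0$ and~$R_P=R_C$.
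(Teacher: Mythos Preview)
Your proposal is correct and follows essentially the same route as the paper: combine~\eqref{eq:firine2} with~$\eta_{1,2}=0$ to get~$\eta_{0,1}\geq 0$, apply Proposition~\ref{prop:suffu}, and then translate the equality condition~$z_1\equiv 0$ into~\eqref{eq:1d_strict_loe_cond} via the scalar ODE. In fact you are slightly more careful than the paper, which only spells out the forward implication of the ``iff''; your explicit treatment of the reverse direction and the appeal to uniqueness of the attracting periodic solution are welcome additions.
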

\begin{proof}
    The proof that $R_P \le R_C$ follows immediately from Prop.~\ref{prop:suffu}. Now assume that~$R_p=R_C$. Then~$z_1(t) \equiv 0$ along the unique periodic solution, so
    \[
        x_1(t) \equiv e_1 = \frac{\mean{u}_0}{\mean{u}_0 + \mean{u}_1}.
    \]
    Substituting  this in~\eqref{eq:RFM_nd} yields
    \[
        0 = u_0(t) (1 - e_1) - u_1(t) e_1 = \frac{\mean{u}_1 u_0(t) - \mean{u}_0 u_1(t)}{\mean{u}_0 + \mean{u}_1},
    \]
    and this completes the proof.
\end{proof}

The fact that in the 1D RFM there is no GOE was already proved in~\cite{max_period_output_RFM} using a more complicated argument (see also~\cite{katriel20,RFM_IEEE_CL}).
  
We can now prove Theorem~\ref{thm:main} for~$n\geq 2$. To prove assertion~\ref{item:nisodd}, suppose that~$n$ is odd, i.e.,~$n=2k+1$, and that
all the internal rates are proportional in pairs, that is, for any~$i \in \{1,3,\dots,n-2\}$ there exists $\alpha_i > 0$ such that
  \be \label{eq:allintewq}
    u_i(t)=\alpha_i u_{i+1}(t), \text{ for all } t\in[0,T).
  \ee
For~$i=1$, Eq.~\eqref{eq:eta01_ub_moments} becomes 
  \[
    \eta_{0,1} \geq 
    \eta_{1,2} e_1^2 = \alpha_1 \eta_{2,2}e_1^2  ,
  \]
so if~$\eta_{2,2}\geq 0$ then there is no GOE and the proof is completed. We may thus assume that~$\eta_{2,2} <0$. 
For~$i=3$, Eq.~\eqref{eq:eta01_ub_moments} becomes 
  \[
    \eta_{0,1} \geq 
    \eta_{3,4} e_3^2 -\eta_{2,2}(1-e_3)^2  ,
  \]
and using~\eqref{eq:allintewq} gives~$ \eta_{0,1} \geq 
    \alpha_3 \eta_{4,4} e_3^2 -\eta_{2,2}(1-e_3)^2 $.
Thus, if~$\eta_{4,4}\geq 0 $ then there is no GOE, and we may assume that~$\eta_{4,4} <0$.
Continuing in this manner, we may assume that~$\eta_{2j,2j}<0$, for all~$j\leq k$. 
For~$i=n=2k+1$, Eq.~\eqref{eq:eta01_ub_moments} becomes 
  \[
    \eta_{0,1} \geq 
    -\eta_{2k,2k}(1-e_{2k+1})^2 > 0 ,
  \]
and this completes the proof of the assertion~\ref{item:nisodd} in Theorem~\ref{thm:main}.

To prove assertion~\ref{item:niseven_evenpairs}, suppose that~$n$ is even and that~\eqref{eq:paireql} holds. Then~$\eta_{0,1}=\alpha_0\eta_{1,1}$, and we have established no GOE if $\eta_{1,1} \ge 0$. Therefore
we may assume that $\eta_{1,1} < 0$. Considering~\eqref{eq:eta01_ub_moments} with $i=2$ we conclude that there is no GOE if $\eta_{2,3} =\alpha_2\eta_{3,3}\ge 0$, and we may assume that~$\eta_{3,3} < 0$. Continuing in this manner, we have that $\eta_{n-1,n-1} < 0$, and we already know from inequality~\eqref{eq:secine2} that this implies that there is no GOE.  This completes the proof of assertion~\ref{item:niseven_evenpairs} in Theorem~\ref{thm:main}.

To prove assertion~\ref{item:niseven_oddpairs}, suppose that~$n$ is even and that~\eqref{eq:paireql2} holds. Then~$\eta_{n-1,n}=\alpha_{n-1}\eta_{n,n}$, and due to Proposition~\ref{prop:suffu} there is no GOE if $\eta_{n-1,n} \le 0$. Therefore
we may assume that $\eta_{n-1,n} > 0$. Considering~\eqref{eq:eta01_ub_moments} with $i=n-1$ we conclude that there is no GOE if $\eta_{n-2,n-2} =\alpha_{n-3}^{-1}\eta_{n-3,n-2}\le 0$, and we may assume that $\eta_{n-3,n-2} > 0$. Continuing in this manner, we have that $\eta_{1,2} > 0$, and we already know from inequality~\eqref{eq:firine2} that this implies that there is no GOE.  This completes the proof of assertion~\ref{item:niseven_oddpairs} in Theorem~\ref{thm:main}.

Assertions~\ref{item:constrates_butu0un}, \ref{item:constrates_butun}, and~\ref{item:constrates_butu0} follow immediately from the assertions~\ref{item:nisodd}, \ref{item:niseven_evenpairs}, and~\ref{item:niseven_oddpairs}, respectively, as one may choose $\alpha_i = \mean{u}_i/\mean{u}_{i+1}$ for the corresponding values of $i$.
This completes the proof of Theorem~\ref{thm:main}.

Reconsidering the proof of 
 Proposition~\ref{prop:eta01ound}
 allows to slightly strengthen Theorem~\ref{thm:main}. To do this, we introduce another definition. 
 \begin{definition}
  A $T$-periodic solution~$\gamma:[0,T)\to(0,1)^n$ of the~RFM is called  \emph{degenerate} if there exists an index~$i\in\{1,\dots,n\}$ such that~$\gamma_i(t)$ is constant. 
 Otherwise, $\gamma$ is called \emph{non-degenerate}. 
  \end{definition}
  
  \begin{corollary}
    Suppose that one of the cases  in Theorem~\ref{thm:main} holds. Then for any $T$-periodic control~$u(t)$ that yields a  non-degenerate~$T$-periodic solution we have
\be\label{eq:ineqrp}
R_P<R_C.
\ee
  \end{corollary}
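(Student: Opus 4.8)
The plan is to convert the strict inequality~\eqref{eq:ineqrp} into the strict sign condition $\eta_{0,1}>0$ and to harvest that strictness from the equality clause of Proposition~\ref{prop:eta01ound}. By~\eqref{eq:pott} we have $R_P=R_C-\eta_{0,1}$, and Theorem~\ref{thm:main} already yields $\eta_{0,1}\ge 0$ in each of the six cases, so~\eqref{eq:ineqrp} is equivalent to $\eta_{0,1}>0$. I would argue by contradiction: assume that $\gamma$ is non-degenerate --- so that no $\gamma_i$, and hence no $z_i=\gamma_i-e_i$, is constant, whence $z_i(t)\not\equiv 0$ for every $i\in\{1,\dots,n\}$ --- and yet $\eta_{0,1}=0$. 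The decisive observation is that under non-degeneracy the equality case of Proposition~\ref{prop:eta01ound} never occurs, so that \emph{every} inequality~\eqref{eq:eta01_ub_moments}, and in particular its extreme instances~\eqref{eq:firine2} and~\eqref{eq:secine2} at $i=1$ and $i=n$, holds \emph{strictly}. Since assertions~\ref{item:constrates_butu0un}, \ref{item:constrates_butun} and~\ref{item:constrates_butu0} are specializations of~\ref{item:nisodd}, \ref{item:niseven_evenpairs} and~\ref{item:niseven_oddpairs} (take $\alpha_i=\mean{u}_i/\mean{u}_{i+1}$), it suffices to treat the latter three.

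For assertion~\ref{item:nisodd} I would rerun the proof with $\eta_{0,1}=0$ inserted. The strict form of~\eqref{eq:eta01_ub_moments} at $i=1$ reads $0>\alpha_1\eta_{2,2}e_1^2$ and \emph{forces} $\eta_{2,2}<0$; the strict form at $i=3$ then forces $\eta_{4,4}<0$, and so on along the odd indices. Because $\eta_{0,1}$ is now pinned to $0$, the branching of the original argument disappears and the chain is uniquely determined, terminating at $i=n=2k+1$ with $0>-\eta_{2k,2k}(1-e_n)^2>0$, the sought contradiction. Hence $\eta_{0,1}>0$ and $R_P<R_C$ in this case.

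For the even-$n$ pairing assertions~\ref{item:niseven_evenpairs} and~\ref{item:niseven_oddpairs} the only new feature is that the chain is opened by an equality rather than by Proposition~\ref{prop:eta01ound}. In~\ref{item:niseven_evenpairs} the relation $\eta_{0,1}=\alpha_0\eta_{1,1}$ propagates $\eta_{0,1}=0$ to $\eta_{1,1}=0$; the forcing argument, now run along the even indices $i=2,4,\dots,n$ and using the pairing identity $\eta_{i,i+1}=\alpha_i\eta_{i+1,i+1}$, produces $\eta_{3,3},\eta_{5,5},\dots,\eta_{n-1,n-1}<0$ and closes with the strict form of~\eqref{eq:secine2}. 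In~\ref{item:niseven_oddpairs} Proposition~\ref{prop:suffu} gives $\eta_{0,1}=-\eta_{n,n}$ and the pairing gives $\eta_{n-1,n}=\alpha_{n-1}\eta_{n,n}$, so $\eta_{0,1}=0$ forces $\eta_{n-1,n}=0$; running the forcing argument down the odd indices $i=n-1,n-3,\dots,1$ yields $\eta_{n-2,n-2},\dots,\eta_{1,2}>0$ and closes with the strict form of~\eqref{eq:firine2}. In both cases $\eta_{0,1}>0$, i.e.\ $R_P<R_C$.

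I expect the main obstacle to be exactly these ``direct'' opening steps of~\ref{item:niseven_evenpairs} and~\ref{item:niseven_oddpairs}: there the bound $\eta_{0,1}\ge 0$ in Theorem~\ref{thm:main} arises from an equality, so a naive ``replace $\ge$ by $>$ everywhere'' does not deliver strictness and must be avoided. The contradiction framing circumvents this, because imposing $\eta_{0,1}=0$ collapses the case-splitting of the original proof into a single forced chain of sign deductions whose terminal strict inequality is self-contradictory. A secondary point requiring care is the bookkeeping of which pairing identity is invoked at each step (its index parity differs between~\ref{item:niseven_evenpairs} and~\ref{item:niseven_oddpairs}), together with the verification that~\eqref{eq:firine2} and~\eqref{eq:secine2} are genuinely strict under non-degeneracy, which holds because they are the $i=1$ and $i=n$ instances of~\eqref{eq:eta01_ub_moments} and $z_1\not\equiv 0$, $z_n\not\equiv 0$.
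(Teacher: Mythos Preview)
Your proof is correct and follows essentially the same approach as the paper: non-degeneracy makes every instance of~\eqref{eq:eta01_ub_moments} strict via the equality clause of Proposition~\ref{prop:eta01ound}, and rerunning the proof of Theorem~\ref{thm:main} with strict inequalities then yields $\eta_{0,1}>0$, i.e., $R_P<R_C$. Your contradiction framing and explicit handling of the equality-opening steps in assertions~\ref{item:niseven_evenpairs} and~\ref{item:niseven_oddpairs} make the argument more explicit than the paper's terse ``the arguments in the proof of Theorem~\ref{thm:main} yield~\eqref{eq:ineqrp}'', but the underlying idea is identical.
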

In other  words, in this case constant rates are in fact strictly better than periodic ones. 
\begin{proof}
    Fix an arbitrary~$i\in\{1,\dots,n\}$. Since the periodic solution is non-degenerate, $z_i(t)=x_i(t)-e_i$ 
    is not zero at some time~$t_i\in[0,T) $, and this implies that the term on the left-hand side of~\eqref{eq:tightzi}
    is positive. 
    Thus, the inequality in~\eqref{eq:etaii}   becomes strict. Now  the arguments in the proof of Theorem~\ref{thm:main} yield~\eqref{eq:ineqrp}.
\end{proof}

While Theorem~\ref{thm:main} proves that there is no GOE in  certain cases, numerical experiments suggest that there is no GOE in general.
\begin{Example}\label{exa:random}
We simulated an RFM with~$n=4$.
Fig.~\ref{fig:4d_unxn} shows the moving average of the production rate \[
\int_t^{t+T} u_4(\tau) x_4(\tau) d\tau
\]
for several simulations. The average  value of the positive transition rates was kept the same in all simulations, and the initial conditions were always set to the steady state corresponding to the  constant inputs, that is,~$x(0)=e$. The transition rates were all of the form $u_i(t) = \mean{u}_i + A_i \cos(2\pi t + \phi_i)$, where $\phi_i \in [0,2\pi)$ and $A_i \in (0,\mean{u}_i)$ were chosen at  random. Thus, the rates are~$T$-periodic, with~$T=1$. 
It can be seen the average  production rate  in the periodic case  is always lower than the production rate for constant inputs, i.e.,~$R_P < R_C$.

Fig.~\ref{fig:zn} depicts $\gamma_4(t)$ over a single period in the  simulations. Solid lines show the instantaneous value and dashed lines show the average  over a single period. The black solid line is the value~$e_4$. Note that in some cases $\mean{\gamma}_4 > e_4$ while in others $\mean{\gamma}_4 < e_4$. In all cases $\gamma_4$ oscillates around~$e_4$, so~$z_4(t)=\gamma_4(t)-e_4$ changes sign.

The mean values of the inputs used in the simulations are $\mean{u}_1 = 13.56, \mean{u}_2 = 11.38, \mean{u}_3 = 3.90, \mean{u}_4 = 3.53$, and $\mean{u}_5 = 2.34$. The parameters $A_i,\phi_i$ of all inputs in each of the simulations are included in Table~\ref{tab:sim_params}.
\end{Example}

\begin{figure}
    \centering
    \includegraphics{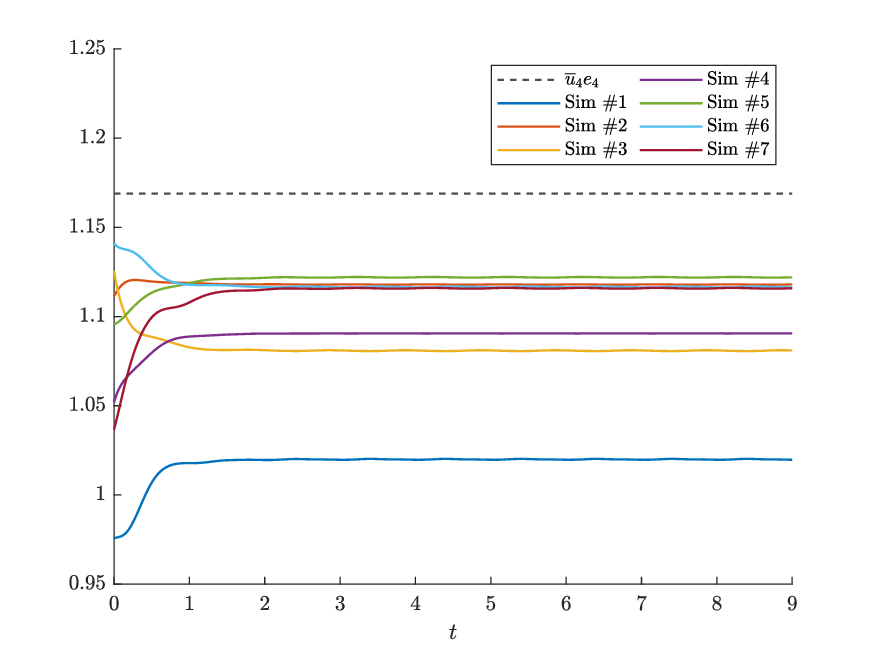}
    \caption{Moving average of the  production rate $u_4 x_4$ in an RFM with~$n=4$ for different choices of~$1 $-periodic  transition rates.}
    \label{fig:4d_unxn}
\end{figure}

\begin{figure}
    \centering
    \includegraphics{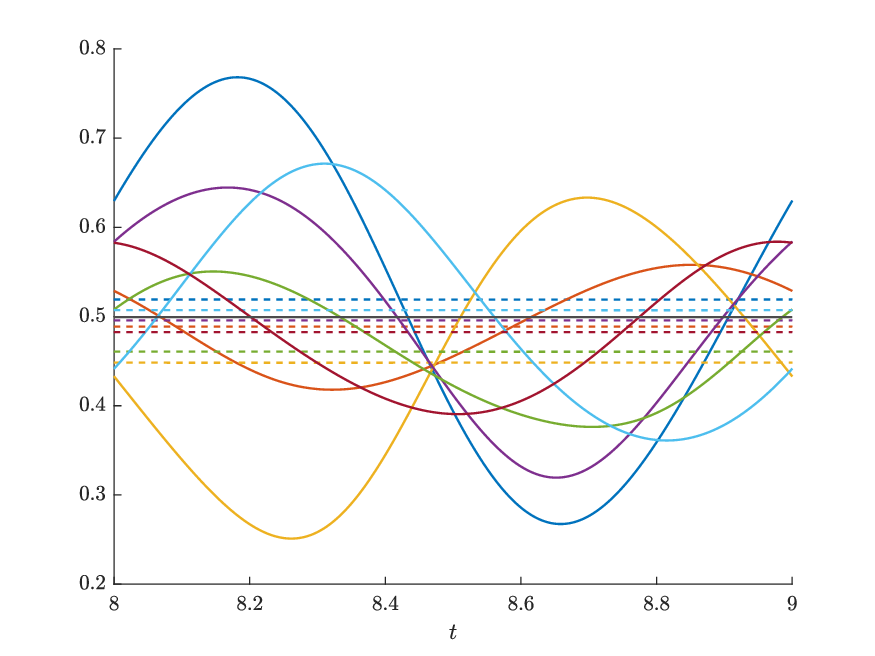}
    \caption{Plot of $\gamma_4(t)$ over a single period for several simulations using different transition rates with the same average values.}
    \label{fig:zn}
\end{figure}

\begin{table}
    \centering
    \begin{tabular}{|| c|| c c c c c c c c c c||}
    	\hline
   sim~\# &	$A_1$ & $\phi_1$ & $A_2$ & $\phi_2$ & $A_3$ & $\phi_3$ & $A_4$ & $\phi_4$ & $A_5$ & $\phi_5$ \\
    	\hline \hline
    1&	8.42 & 1.18 & 1.18 & 0.17 & 1.30 & 4.03 & 2.84 & 0.08 & 2.25 & 3.42 \\ 
    2&	6.78 & 3.06 & 4.87 & 5.77 & 3.72 & 3.24 & 0.78 & 1.61 & 1.11 & 1.30 \\ 
    3&	5.06 & 5.91 & 9.05 & 4.08 & 1.08 & 5.78 & 3.37 & 4.22 & 1.67 & 5.95 \\ 
4&    	9.43 & 0.76 & 0.88 & 1.73 & 3.46 & 4.87 & 3.15 & 6.16 & 0.87 & 4.11 \\ 
  5&  	6.54 & 2.02 & 3.23 & 5.92 & 3.14 & 5.57 & 2.40 & 1.09 & 1.19 & 1.64 \\ 
6&    	6.12 & 4.86 & 1.26 & 4.43 & 1.24 & 3.51 & 1.01 & 1.69 & 2.09 & 3.99 \\ 
  7&  	2.87 & 0.40 & 5.62 & 3.28 & 3.03 & 5.37 & 0.99 & 1.52 & 1.91 & 1.66 \\  
    	\hline 
    \end{tabular}\vspace*{0.25 cm}
    \caption{Parameters of the periodic inputs used in the seven 
    simulations in Example~\ref{exa:random}. Each row corresponds to a specific~simulation.}
    \label{tab:sim_params}
\end{table}
\section{Discussion}
We studied the GOE in the  $n$-dimensional RFM. Our analysis approach is new and is  based on calculating integrals of the (centered) state-variables  and ``higher-order moments'' of    these 
variables along the periodic solution. 

We proved that under certain conditions the $n$-dimensional RFM admits no GOE. We conjecture  that this in fact   always true, but even for the case~$n=2$ we are not able to prove this in the most general setting.

 An intuitive explanation for the fact that constant controls are always optimal, based on numerical  simulations,  is as follows. When~$u(t)$ is $T$-periodic and non-constant there are times~$t$ such that~$u(t)>\mean{u}$ and times when the opposite inequality holds. In general, the ``profit'' to the average production rate  in the 
 ``good'' times is less than the ``loss'' in the ``bad'' times.  An interesting line for further research is to try and rigorously formulate and prove this observation. Note that in the case of optimal periodic control for scalar   systems (i.e., when~$n=1$), convexity  theory
plays an important role~\cite{1dGOE,scalar_int}.

One case where we proved that there is no GOE is when~$n$ is odd and all 
the      internal rates are equal. 
It is interesting  to note that in~TASEP, with all rates {\sl constant}, the 
case of equal  internal hopping rates is the one that is amenable to analysis. Note also that it was recently shown that~TASEP, just like the RFM, is a contractive system (in a suitable stochastic sense)~\cite{TASEP_RANDOM_ATTRACTION}.

The RFM is a deterministic system, and it may be of interest to consider if the results on the~GOE 
hold also after adding some form of noise   to the system. 
Since  the RFM is contractive,
the results in~\cite{SLOTINE_RANDOM} suggest  that an  additive noise of small intensity (in the Ito-SDE-sense) will only slightly affect the output rate, which would imply that if the noise  does lead to  a GOE then it is likely to be small.  
In any  case,  a precise notion of  GOE in the presence of noise is, to the best of our knowledge, missing. 

Finally,  entrainment  to periodic excitations is important in many natural and artificial systems. 
  For example, connecting several artificial biological systems that entrain to a common clock may lead to a well-functioning modular system~\cite{control_syn_bio}. Epidemic outbreaks entrain to   seasonal forcing~\cite{seasonal_forcing}.
     Analysis of GOE in such models is related to interesting    questions like: (1) is periodic production of synthetic biology constructs more efficient than
constant production?   
and (2) does seasonality make the epidemics, on average,  more or less severe?

\subsection*{Acknowledgements} We thank the editor and the  anonymous reviewers  for a timely review process, and for many  detailed and helpful comments.  
\subsection*{Conflict of interests declaration} The authors declare no conflict of   interests.  
\subsection*{Author contributions} All authors performed the research and wrote the paper.



\begin{thebibliography}{10}
\providecommand{\url}[1]{#1}
\csname url@samestyle\endcsname
\providecommand{\newblock}{\relax}
\providecommand{\bibinfo}[2]{#2}
\providecommand{\BIBentrySTDinterwordspacing}{\spaceskip=0pt\relax}
\providecommand{\BIBentryALTinterwordstretchfactor}{4}
\providecommand{\BIBentryALTinterwordspacing}{\spaceskip=\fontdimen2\font plus
\BIBentryALTinterwordstretchfactor\fontdimen3\font minus
  \fontdimen4\font\relax}
\providecommand{\BIBforeignlanguage}[2]{{%
\expandafter\ifx\csname l@#1\endcsname\relax
\typeout{** WARNING: IEEEtranS.bst: No hyphenation pattern has been}%
\typeout{** loaded for the language `#1'. Using the pattern for}%
\typeout{** the default language instead.}%
\else
\language=\csname l@#1\endcsname
\fi
#2}}
\providecommand{\BIBdecl}{\relax}
\BIBdecl

\bibitem{max_period_output_RFM}
M.~Ali Al-Radhawi, M.~Margaliot, and E.~D. Sontag, ``Maximizing average
  throughput in oscillatory biochemical synthesis systems: an optimal control
  approach,'' \emph{Royal Society Open Science}, vol.~8, no.~9, p. 210878,
  2021.

\bibitem{sontag_cotraction_tutorial}
Z.~Aminzare and E.~D. Sontag, ``Contraction methods for nonlinear systems: A
  brief introduction and some open problems,'' in \emph{{Proc.\ 53rd IEEE Conf.
  on Decision and Control}}, Los Angeles, CA, 2014, pp. 3835--3847.

\bibitem{EYAL_RFMD1}
E.~Bar-Shalom, A.~Ovseevich, and M.~Margaliot, ``Ribosome flow model with
  different site sizes,'' \emph{{SIAM} J. Applied Dynamical Systems}, vol.~19,
  no.~1, pp. 541--576, 2020.

\bibitem{scalar_int}
T.~Bayen, A.~Rapaport, and F.-Z. Tani, ``Optimal periodic control for scalar
  dynamics under integral constraint on the input,'' \emph{Math. Control Relat.
  Fields}, vol.~10, no.~3, p. 547–571, 2020.

\bibitem{peri_fishing}
A.~O. Belyakov and V.~Veliov, ``Constant versus periodic fishing: age
  structured optimal control approach,'' \emph{Math. Model. Nat. Phenom.},
  vol.~9, p. 20–37, 2014.

\bibitem{solvers_guide}
R.~A. Blythe and M.~R. Evans, ``Nonequilibrium steady states of matrix-product
  form: a solver's guide,'' \emph{J. Phys. A: Math. Theor.}, vol.~40, no.~46,
  pp. R333--R441, 2007.

\bibitem{RevModPhys.85.135}
P.~C. Bressloff and J.~M. Newby, ``Stochastic models of intracellular
  transport,'' \emph{Rev. Mod. Phys.}, vol.~85, pp. 135--196, Jan 2013.

\bibitem{bullo_contractive_systems}
F.~Bullo, \emph{Contraction Theory for Dynamical Systems}.\hskip 1em plus 0.5em
  minus 0.4em\relax Kindle Direct Publishing, 2022.

\bibitem{control_syn_bio}
D.~{Del Vecchio}, J.~Aaron, and Y.~Qian, ``Control theory meets synthetic
  biology,'' \emph{J. R. Soc. Interface}, vol.~13, p. 20160380, 2016.

\bibitem{tuller_traffic_jams2018}
A.~Diament, A.~Feldman, E.~Schochet, M.~Kupiec, Y.~Arava, and T.~Tuller, ``The
  extent of ribosome queuing in budding yeast,'' \emph{PLOS Computational
  Biology}, vol.~14, pp. 1--21, 2018.

\bibitem{peri_cell_cycle}
P.~Eser, C.~Demel, K.~C. Maier, B.~Schwalb, N.~Pirkl, D.~E. Martin, P.~Cramer,
  and A.~Tresch, ``Periodic {mRNA} synthesis and degradation co-operate during
  cell cycle gene expression,'' \emph{Molecular Systems Biology}, vol.~10,
  no.~1, p. 717, 2014.

\bibitem{Frenkel2012}
M.~Frenkel-Morgenstern, T.~Danon, T.~Christian, T.~Igarashi, L.~Cohen, Y.-M.
  Hou, and L.~J. Jensen, ``Genes adopt non-optimal codon usage to generate cell
  cycle-dependent oscillations in protein levels,'' \emph{Molecular Systems
  Biology}, vol.~8, no.~1, p. 572, 2012.

\bibitem{TASEP_RANDOM_ATTRACTION}
L.~Gruene, T.~Kriecherbauer, and M.~Margaliot, ``Random attraction in the
  {TASEP} model,'' \emph{SIAM J. on Applied Dynamical Systems}, vol.~20, no.~1,
  pp. 65--93, 2021.

\bibitem{GruM16}
L.~Gr\"une and M.~A. M\"uller, ``On the relation between strict dissipativity
  and the turnpike property,'' \emph{Syst. Contr. Lett.}, vol.~90, pp. 45--53,
  2016.

\bibitem{1dGOE}
T.~Guilmeau and A.~Rapaport, ``Singular arcs in optimal periodic control
  problems with scalar dynamics and integral input constraint,'' \emph{J.
  Optimization Theory and Applications}, 2022, to appear.

\bibitem{allgower_RFM}
\BIBentryALTinterwordspacing
W.~Halter, J.~M. Montenbruck, and F.~Allgower, ``Geometric stability
  considerations of the ribosome flow model with pool,'' 2016. [Online].
  Available: \url{https://arxiv.org/abs/1610.03986}
\BIBentrySTDinterwordspacing

\bibitem{HALTER2017267}
W.~Halter, J.~M. Montenbruck, Z.~A. Tuza, and F.~Allgower, ``A resource
  dependent protein synthesis model for evaluating synthetic circuits,''
  \emph{J. Theoretical Biology}, vol. 420, pp. 267--278, 2017.

\bibitem{Hershey01122012}
J.~W. Hershey, N.~Sonenberg, and M.~B. Mathews, ``Principles of translational
  control: An overview,'' \emph{Cold Spring Harbor Perspectives in Biology},
  vol.~4, no.~12, 2012.

\bibitem{Higareda2010}
A.~E. Higareda-Mendoza and M.~A. Pardo-Galvan, ``Expression of human eukaryotic
  initiation factor {3f} oscillates with cell cycle in {A549} cells and is
  essential for cell viability,'' \emph{Cell Div.}, vol.~5, no.~10, 2010.

\bibitem{TASEP_PEDES}
H.~J. Hilhorst and C.~Appert-Rolland, ``A multi-lane {TASEP} model for crossing
  pedestrian traffic flows,'' \emph{Journal of Statistical Mechanics: Theory
  and Experiment}, no.~06, p. P06009, 2012.

\bibitem{Aditi_abortions}
A.~Jain and A.~Gupta, ``Modeling {mRNA} translation with ribosome abortions,''
  \emph{IEEE/ACM Trans. on Computational Biology and Bioinformatics}, 2022, to
  appear.

\bibitem{Aditi_extended_2022}
A.~Jain and A.~K. Gupta, ``Modeling transport of extended interacting objects
  with drop-off phenomenon,'' \emph{Plos one}, vol.~17, no.~5, p. e0267858,
  2022.

\bibitem{aditi_networks}
A.~Jain, M.~Margaliot, and A.~K. Gupta, ``Large-scale {mRNA} translation and
  the intricate effects of competition for the finite pool of ribosomes,''
  \emph{J. R. Soc. Interface}, vol.~19, p. 2022.0033, 2022.

\bibitem{periodic_coop_1st_integral}
J.~Jiang, ``Periodic time dependent cooperative systems of differential
  equations with a first integral,'' \emph{Ann. Differ. Equations}, vol.~8,
  no.~4, pp. 429--437, 1992.

\bibitem{katriel20}
G.~Katriel, ``Optimality of constant arrival rate for a linear system with a
  bottleneck entrance,'' \emph{Systems Control Lett.}, vol. 138, p. 104649,
  2020.

\bibitem{fierce_compete}
R.~Katz, E.~Attias, T.~Tuller, and M.~Margaliot, ``Translation in the cell
  under fierce competition for shared resources: a mathematical model,''
  \emph{J. Royal Society Interface}, vol.~19, p. 20220535, 2022.

\bibitem{kriecherbauer_krug2010}
T.~Kriecherbauer and J.~Krug, ``A pedestrian's view on interacting particle
  systems, \mbox{KPZ} universality, and random matrices,'' \emph{J. Phys. A:
  Math. Theor.}, vol.~43, p. 403001, 2010.

\bibitem{energy_consuming}
P.~J. Lahtvee, B.~J. Sanchez, A.~Smialowska, S.~Kasvandik, I.~E. Elsemman,
  F.~Gatto, and J.~Nielsen, ``Absolute quantification of protein and {mRNA}
  abundances demonstrate variability in gene-specific translation efficiency in
  yeast,'' \emph{Cell Syst.}, vol.~4, no.~5, pp. 495--504, 2017.

\bibitem{LOHMILLER1998683}
W.~Lohmiller and J.-J.~E. Slotine, ``On contraction analysis for non-linear
  systems,'' \emph{Automatica}, vol.~34, pp. 683--696, 1998.

\bibitem{peri_opti_cont}
C.~Maffezzoni, ``{Hamilton-Jacobi} theory for periodic control problems,''
  \emph{J. Optimization Theory and Appl.}, vol.~14, p. 21–29, 1974.

\bibitem{randon_rfm}
M.~Margaliot, W.~Huleihel, and T.~Tuller, ``Variability in {mRNA} translation:
  a random matrix theory approach,'' \emph{Sci. Rep.}, vol.~11, 2021.

\bibitem{RFM_entrain}
M.~Margaliot, E.~D. Sontag, and T.~Tuller, ``Entrainment to periodic initiation
  and transition rates in a computational model for gene translation,''
  \emph{PLoS ONE}, vol.~9, no.~5, p. e96039, 2014.

\bibitem{margaliot2019revisiting}
M.~Margaliot and E.~D. Sontag, ``Revisiting totally positive differential
  systems: A tutorial and new results,'' \emph{Automatica}, vol. 101, pp.
  1--14, 2019.

\bibitem{margaliot2012stability}
M.~Margaliot and T.~Tuller, ``Stability analysis of the ribosome flow model,''
  \emph{IEEE/ACM Trans. Computational Biology and Bioinformatics}, vol.~9,
  no.~5, pp. 1545--1552, 2012.

\bibitem{rfm_feedback}
------, ``Ribosome flow model with positive feedback,'' \emph{J. Royal Society
  Interface}, vol.~10, no.~85, p. 20130267, 2013.

\bibitem{seasonal_epidemics}
M.~E. Martinez, ``The calendar of epidemics: Seasonal cycles of infectious
  diseases,'' \emph{PLoS Pathog.}, vol.~14, no.~11, p. e1007327, 2018.

\bibitem{Ortho_RFM}
J.~Miller, M.~Al-Radhawi, and E.~Sontag, ``Mediating ribosomal competition by
  splitting pools,'' \emph{IEEE Control Systems Letters}, vol.~5, pp.
  1555--1560, 2020.

\bibitem{nani}
I.~Nanikashvili, Y.~Zarai, A.~Ovseevich, T.~Tuller, and M.~Margaliot,
  ``Networks of ribosome flow models for modeling and analyzing intracellular
  traffic,'' \emph{Sci. Rep.}, vol.~9, no.~1, 2019.

\bibitem{NIKOLAEV20181232}
E.~V. Nikolaev, S.~J. Rahi, and E.~D. Sontag, ``Subharmonics and chaos in
  simple periodically forced biomolecular models,'' \emph{Biophysical Journal},
  vol. 114, no.~5, pp. 1232--1240, 2018.

\bibitem{patil2012}
A.~Patil, M.~Dyavaiah, F.~Joseph, J.~P. Rooney, C.~T. Chan, P.~C. Dedon, and
  T.~J. Begley, ``Increased {tRNA} modification and gene-specific codon usage
  regulate cell cycle progression during the {DNA} damage response,''
  \emph{Cell Cycle}, vol.~11, no.~19, pp. 3656--65, 2012.

\bibitem{pavlov}
A.~Pavlov, N.~van~de Wouw, and H.~Nijmeijer, ``Frequency response functions for
  nonlinear convergent systems,'' \emph{IEEE Trans.\ Automat.\ Control},
  vol.~52, no.~6, pp. 1159--1165, 2007.

\bibitem{SLOTINE_RANDOM}
Q.-C. Pham, N.~Tabareau, and J.-J. Slotine, ``A contraction theory approach to
  stochastic incremental stability,'' \emph{IEEE Trans.\ Automat.\ Control},
  vol.~54, no.~4, pp. 816--820, 2009.

\bibitem{rfm_max}
G.~Poker, Y.~Zarai, M.~Margaliot, and T.~Tuller, ``Maximizing protein
  translation rate in the nonhomogeneous ribosome flow model: A convex
  optimization approach,'' \emph{J. Royal Society Interface}, vol.~11, no. 100,
  p. 20140713, 2014.

\bibitem{rfm_sense}
G.~Poker, M.~Margaliot, and T.~Tuller, ``Sensitivity of {mRNA} translation,''
  \emph{Sci. Rep.}, vol.~5, no. 12795, 2015.

\bibitem{translation_and_division}
M.~Polymenis and R.~Aramayo, ``Translate to divide: control of the cell cycle
  by protein synthesis,'' \emph{Microb. Cell}, vol.~2, no.~4, pp. 94--104,
  2015.

\bibitem{cell_cycle_dep}
S.~Pyronnet and N.~Sonenberg, ``Cell-cycle-dependent translational control,''
  \emph{Current Opinion in Genetics \& Development}, vol.~11, no.~1, pp.
  13--18, 2001.

\bibitem{Raveh2016}
A.~Raveh, M.~Margaliot, E.~Sontag, and T.~Tuller, ``A model for competition for
  ribosomes in the cell,'' \emph{J. Royal Society Interface}, vol.~13, no. 116,
  p. 20151062, 2016.

\bibitem{rfmr_2015}
A.~Raveh, Y.~Zarai, M.~Margaliot, and T.~Tuller, ``Ribosome flow model on a
  ring,'' \emph{IEEE/ACM Transactions on Computational Biology and
  Bioinformatics}, vol.~12, no.~6, pp. 1429--1439, 2015.

\bibitem{reuveni2011genome}
S.~Reuveni, I.~Meilijson, M.~Kupiec, E.~Ruppin, and T.~Tuller, ``Genome-scale
  analysis of translation elongation with a ribosome flow model,'' \emph{PLOS
  Computational Biology}, vol.~7, no.~9, p. e1002127, 2011.

\bibitem{seasonal_forcing}
P.~Rohani, D.~J. Earn, and B.~T. Grenfell, ``Opposite patterns of synchrony in
  sympatric disease metapopulations,'' \emph{Science}, vol. 286, no. 5441, p.
  968–71, 1999.

\bibitem{entrain_trans}
G.~Russo, M.~di~Bernardo, and E.~D. Sontag, ``Global entrainment of
  transcriptional systems to periodic inputs,'' \emph{PLOS Computational
  Biology}, vol.~6, no.~4, pp. 1--26, 04 2010.

\bibitem{RFM_IEEE_CL}
M.~{Sadeghi}, M.~A. {Al-Radhawi}, M.~{Margaliot}, and E.~D. {Sontag}, ``No
  switching policy is optimal for a positive linear system with a bottleneck
  entrance,'' \emph{IEEE Control Systems Letters}, vol.~3, no.~4, pp. 889--894,
  2019.

\bibitem{TASEP_book}
A.~Schadschneider, D.~Chowdhury, and K.~Nishinari, \emph{Stochastic Transport
  in Complex Systems: From Molecules to Vehicles}.\hskip 1em plus 0.5em minus
  0.4em\relax Elsevier, 2011.

\bibitem{Periodic_Chemical_Reactors}
P.~Silveston and R.~R. Hudgins, \emph{Periodic Operation of Chemical
  Reactors}.\hskip 1em plus 0.5em minus 0.4em\relax Butterworth-Heinemann,
  2013.

\bibitem{neurojams}
S.~A. Small, S.~Simoes-Spassov, R.~Mayeux, and G.~A. Petsko, ``Endosomal
  traffic jams represent a pathogenic hub and therapeutic target in
  {A}lzheimer's disease,'' \emph{Trends Neurosci.}, vol.~40, pp. 592--602,
  2017.

\bibitem{RFM_FROM_PDE}
G.~Szederkenyi, B.~Acs, G.~Liptak, and M.~A. Vaghy, ``Persistence and stability
  of a class of kinetic compartmental models,'' \emph{J. Math. Chemistry},
  vol.~60, no.~4, pp. 1001--1020, 2022.

\bibitem{RFM_as_CRN}
\BIBentryALTinterwordspacing
G.~Szederkenyi and M.~A. Vaghy, ``Persistence and stability of generalized
  ribosome flow models with time-varying transition rates,'' 2022. [Online].
  Available: \url{https://arxiv.org/abs/2211.10653}
\BIBentrySTDinterwordspacing

\bibitem{tuller_ramp}
T.~Tuller, A.~Carmi, K.~Vestsigian, S.~Navon, Y.~Dorfan, J.~Zaborske, T.~Pan,
  O.~Dahan, I.~Furman, and Y.~Pilpel, ``An evolutionarily conserved mechanism
  for controlling the efficiency of protein translation,'' \emph{Cell}, vol.
  141, no.~2, p. 344–354, 2010.

\bibitem{RFM_AS_PORT}
M.~A. Vaghy and G.~Szederkenyi, ``Hamiltonian representation of generalized
  ribosome flow models,'' in \emph{in Proc. European Control Conference
  (ECC22)}, London, England, 2022.

\bibitem{cell_cycle_dep_translation}
M.~VanInsberghe, J.~van~den Berg, A.~Andersson-Rolf, H.~Clevers, and A.~{van
  Oudenaarden}, ``Single-cell ribo-seq reveals cell cycle-dependent
  translational pausing,'' \emph{Nature}, vol. 597, p. 561–565, 2021.

\bibitem{down_reg_mrna}
Y.~Zarai, M.~Margaliot, and T.~Tuller, ``Optimal down regulation of {mRNA}
  translation,'' \emph{Sci. Rep.}, vol.~7, no. 41243, 2017.

\bibitem{alexander2017}
Y.~Zarai, A.~Ovseevich, and M.~Margaliot, ``Optimal translation along a
  circular {mRNA},'' \emph{Sci. Rep.}, vol.~7, no.~1, p. 9464, 2017.

\bibitem{TASEP_tutorial_2011}
R.~K.~P. Zia, J.~Dong, and B.~Schmittmann, ``Modeling translation in protein
  synthesis with \mbox{TASEP}: A tutorial and recent developments,'' \emph{J.
  Statistical Physics}, vol. 144, pp. 405--428, 2011.

\bibitem{Zur2020}
H.~Zur, R.~Cohen-Kupiec, S.~Vinokour, and T.~Tuller, ``Algorithms for ribosome
  traffic engineering and their potential in improving host cells' titer and
  growth rate,'' \emph{Sci. Rep.}, vol.~10, p. 21202, 2020.

\end{thebibliography}
\end{document}